\documentclass{article}
\usepackage{CJK,CJKnumb,CJKulem,times,dsfont,ifthen,mathrsfs,latexsym,amsfonts,color}
\usepackage{amsmath,amsthm,makeidx,fontenc,amssymb,bm,graphicx,psfrag,listings,curves,extarrows,enumitem}
\usepackage{hyperref}
\usepackage[title]{appendix}


\allowdisplaybreaks[4] 

\usepackage{geometry}
\geometry{left=3cm,right=3cm,top=3cm,bottom=3cm}

\usepackage{indentfirst}
\setlength{\parindent}{2em}

\usepackage{url}
\makeatletter
\def\url@leostyle{%
  \@ifundefined{selectfont}{\def\UrlFont{\sf}}{\def\UrlFont{\small\ttfamily}}}
\makeatother
\urlstyle{leo}

\usepackage{amssymb}
\makeatletter

\newcommand{\Rmnum}[1]{\expandafter\@slowromancap\romannumeral #1@}
\makeatother

\newtheorem{theorem}{Theorem}[section]   
\newtheorem{definition}[theorem]{Definition} 	 
\newtheorem{lemma}[theorem]{Lemma}	 
\newtheorem{corollary}[theorem]{Corollary}	 
\newtheorem{proposition}[theorem]{Proposition}	 
\newtheorem{remark}{Remark}[section]

\newcommand{\al}{\alpha}
\newcommand{\ga}{\gamma}

\newcommand{\e}{\varepsilon}

\newcommand{\iy}{\infty}
\newcommand{\q}{\theta}

\newcommand{\la}{\lambda}
\newcommand{\vp}{\varphi}
\newcommand{\pa}{\partial}

\newcommand{\rh}{\rightharpoonup}

\newcommand{\lab}{\label}
\newcommand{\f}{\frac}
\newcommand{\bt}{\begin{theorem}}
\newcommand{\et}{\end{theorem}}
\newcommand{\bl}{\begin{lemma}}
\newcommand{\el}{\end{lemma}}
\newcommand{\bpr}{\begin{proposition}}
\newcommand{\epr}{\end{proposition}}
\newcommand{\bd}{\begin{definition}}
\newcommand{\ed}{\end{definition}}
\newcommand{\bc}{\begin{corollary}}
\newcommand{\ec}{\end{corollary}}
\newcommand{\bp}{\begin{proof}}
\newcommand{\ep}{\end{proof}}
\newcommand{\bx}{\begin{example}}
\newcommand{\ex}{\end{example}}
\newcommand{\bi}{\begin{exercise}}
\newcommand{\ei}{\end{exercise}}
\newcommand{\br}{\begin{remark}}
\newcommand{\er}{\end{remark}}
\newcommand{\be}{\begin{equation}}
\newcommand{\ee}{\end{equation}}
\newcommand{\bal}{\begin{align}}
\newcommand{\bn}{\begin{enumerate}}
\newcommand{\en}{\end{enumerate}}
\newcommand{\ba}{\begin{align}}
\newcommand{\ea}{\begin{align}}
\newcommand{\bg}{\begin{align*}}
\newcommand{\eg}{\end{align*}}
\newcommand{\bcs}{\begin{cases}}
\newcommand{\ecs}{\end{cases}}

\newcommand{\CR}{{\cal C}}

\newcommand{\HR}{{\cal H}}

\newcommand{\MR}{{\cal M}}
\newcommand{\NR}{{\cal N}}

\newcommand{\PR}{{\cal P}}

\newcommand{\SR}{{\cal S}}


\newcommand{\R}{{\mathbb R}}
\newcommand{\RN}{{\mathbb R^N}}  
\newcommand{\bean}{\begin{eqnarray*}}
\newcommand{\eean}{\end{eqnarray*}}

\newcommand{\sbr}[1]{\left(#1\right)}
\newcommand{\mbr}[1]{\left[#1\right]}
\newcommand{\lbr}[1]{\left\{#1\right\}}
\newcommand{\abr}[1]{\left\langle#1\right\rangle}
\newcommand{\rd}{\mathrm d}


\newcommand{\nm}[1]{\Vert #1 \Vert}
\newcommand{\bu}{\bar U_\q}
\newcommand{\bv}{\bar V_\q}


\setitemize{itemindent=38pt,leftmargin=0pt,itemsep=-0.4ex,listparindent=26pt,partopsep=0pt,parsep=0.5ex,topsep=-0.25ex}

\numberwithin{equation}{section}

\begin{document}
\theoremstyle{plain}

\title{\bf Asymptotic behavior of positive solutions to the Lane-Emden system in dimension two
	\thanks{This work is partially supported by NSFC (No.12071240);  E-mails: zjchen2016@tsinghua.edu.cn, li-hw17@mails.tsinghua.edu.cn
			\& zou-wm@mail.tsinghua.edu.cn}
		}

\date{}
\author{
{\bf Zhijie Chen$^{1,2}$,\quad Houwang Li$^1$\quad \&\quad Wenming Zou$^1$}\\
\footnotesize \it 1. Department of Mathematical Sciences, Tsinghua University, Beijing 100084, China.\\
\footnotesize \it 2. Yau Mathematical Sciences Center, Tsinghua University, Beijing 100084, China.
		}

\maketitle
\begin{center}
\begin{minipage}{120mm}
\begin{center}{\bf Abstract }\end{center}
	Consider the Lane-Emden system
	$$\bcs
		-\Delta u=v^p,\quad u>0,\quad\text{in}~\Omega,\\
		-\Delta v=u^q,\quad v>0,\quad\text{in}~\Omega,\\
		u=v=0,\quad\text{on}~\pa\Omega,
	\ecs$$
	where $\Omega$ is a smooth bounded domain in $\mathbb{R}^N$ with $N\geq 2$ and $q\ge p>0$. The asymptotic behavior of {\it least energy solutions} of this system was studied in \cite{asy3-1,asy3-2} for $N\geq 3$. However, the case $N=2$ is different and remains completely open. In this paper, we study the case $N=2$ with
	$q=p+\q_p$ and $\sup_p\q_p<+\infty$. Under the following natural condition that holds for least energy solutions
		$$\limsup_{p\to+\infty} p\int_\Omega\nabla u_p\cdot\nabla v_p \rd x<+\infty,$$
	we give a complete description of the asymptotic behavior of {\it positive solutions} $(u_p,v_p)$ (i.e., not only for least energy solutions) as $p\to+\iy$. This seems the first result for asymptotic behaviors of the Lane-Emden system in the two dimension case.

\vskip0.1in
{\bf 2010 Mathematics Subject Classification:} 35J50, 35J15, 35J60.

\vskip0.23in

\end{minipage}
\end{center}

\vskip0.23in
\section{Introduction}

	We consider the Lane-Emden system
\be\lab{sys1}
	\bcs
		-\Delta u=v^p,\quad u>0,\quad\text{in}~\Omega,\\
		-\Delta v=u^q,\quad v>0,\quad\text{in}~\Omega,\\
		u=v=0,\quad\text{on}~\pa\Omega,
	\ecs
\ee
where $\Omega$ is a smooth bounded domain in $\mathbb{R}^N$ with $N\geq 2$ and $p,q>0$.
Without loss of generality, we always assume $q\geq p$.

The system \eqref{sys1} has received great interest and has been widely studied in the literature; see e.g. \cite{exist3,asy3-2,exist6,exist1,exist4,asy3-1,exist5,M1993}, the review article \cite{review1} and the references therein. It is well known that the existence and nonexistence of solutions of \eqref{sys1} is closely related to the so-called critical hyperbola
\[\f{1}{p+1}+\f{1}{q+1}=\f{N-2}{N}.\]
When
\[\f{1}{p+1}+\f{1}{q+1}>\f{N-2}{N},\]
which is known as the subcritical case, it was proved in \cite{exist6} by the degree method that \eqref{sys1} has a positive solution for $p,q>1$, and later it was proved in \cite{exist3} by the variational method that \eqref{sys1} has a least energy solution as long as $pq\neq 1$. However, if
\[\f{1}{p+1}+\f{1}{q+1}\leq\f{N-2}{N},\]
it was proved in \cite{M1993} that \eqref{sys1} has no solutions provided $\Omega$ is star-shaped.

In view of the above existence theory, a natural question that interests us arises:

\medskip

\noindent \textbf{Question}: {\it What is the limiting profile of positive solutions of \eqref{sys1} when $(p,q)$ converges to the critical hyperbola $\HR$? Here}
$$\HR:=\lbr{(p,q)\in\R^2:~p,q>0,~~\f{1}{p+1}+\f{1}{q+1}=\f{N-2}{N}}.$$
\medskip

It turns out that this problem is challenging and was only studied in \cite{asy3-1,asy3-2}. They considered the case $N\geq 3$ so that the critical hyperbola $\HR$ is a curve in $\R^2$. They studied the special case that
 $(p,q)=(p,q_\e)$ satisfies
\be\label{12}
	\f{1}{p+1}+\f{1}{q_\e+1}=\f{N-2}{N}+\e,\quad p\in \Big(\frac2{N-2},\frac{N+2}{N-2}\Big],\;q_\e,\e>0.
\ee
Then by letting $\e\to 0$ so that $(p, q_{\e})\to (p, q)\in \HR$, they proved that, among other things, the least energy solution $(u_\e,v_\e)$
of \eqref{sys1} must blow up, i.e., $\nm{u_\e}_\iy,\nm{v_\e}_\iy\to+\iy$, and a suitable scaling of $(u_\e,v_\e)$
converges to a positive solution of the Lane-Emden system in the whole space
$$
	\bcs
		-\Delta U=V^p,\quad U>0,\quad \text{in}~\RN,\\
		-\Delta V=U^q,\quad V>0,\quad \text{in}~\RN,\\
		U(x),V(x)\to0,\quad\text{as}~|x|\to+\iy.
	\ecs
$$
Furthermore, the blowup set of $(u_\e,v_\e)$ consists of a single point.
\vskip0.1in

As far as we know, the above problem for the case $N=2$ has not been studied in the literature and is completely open. Remark that in contrast with the case $N \geq 3$, $N=2$ implies that the critical hyperbola $\HR$ becomes the single point $(+\infty,+\infty)$, i.e., we have to consider $(p, q)\to (+\infty, +\infty)$, which makes the problem quite different comparing to the case $N\geq 3$.
The purpose of this paper is to give the first answer of this problem for the case $N=2$ under an additional assumption $|q-p|\leq C$ during $(p, q)\to (+\infty, +\infty)$. Thus, up to a subsequence, in this paper we always assume
that
\be\lab{para}
N=2,\quad	q=p+\q_p,\quad \q_p\ge0\quad \text{and}\quad \q_p\to\q \quad \text{as}~p\to+\iy.
\ee
Under the following natural condition
\be\lab{con1}
	\limsup_{p\to+\infty}p\int_\Omega\nabla u_p\cdot\nabla v_p\rd x\le \beta,
\ee
where $\beta$ is a positive constant,
we will give a complete description of the limiting profile of positive solutions $(u_p,v_p)$ as $p\to+\iy$.
The condition \eqref{con1} comes naturally from the bound state solution of \eqref{sys1},
i.e., solutions with finite energy.
In particular, we will prove in Section \ref{leastenergy} that the least energy solutions of \eqref{sys1} satisfy the condition \eqref{con1} with $\beta=8\pi e$.

Our another motivation comes from the study of asymptotic behaviors of positive solutions to the single Lane-Emden equation in dimension two.
In precise, let $p=q$ in \eqref{sys1}, we have
	$$\int_\Omega|\nabla (u_p-v_p)|^2=\int_\RN (v_p^p-u_p^p)(u_p-v_p)\le 0,$$
which means $u_p=v_p$. So system \eqref{sys1} with $N=2$ turns to be the single Lane-Emden equation
\be\lab{equ1}
	\bcs
	-\Delta u= u^p,\quad u>0,\quad\text{in}~\Omega\subset\R^2,\\
	u=0,\quad\text{on}~\pa\Omega,
	\ecs
\ee
and the condition \eqref{con1} turns to be
\be\lab{con2}
	\limsup_{p\to\infty}p\int_\Omega|\nabla u_p|^2\rd x\le \beta.
\ee
The asymptotic behavior of least energy solutions of \eqref{equ1} as $p\to\infty$ has been completely studied in \cite{asy1-1,asy1-2,asy1-3}, where it was proved that
least energy solutions $u_p$ of \eqref{equ1} satisfy the condition \eqref{con2} with $\beta=8\pi e$.
Recently, for high-energy positive solutions of \eqref{equ1}, i.e., under the condition \eqref{con2} with $\beta>8\pi e$,
the asymptotic behavior has been obtained in a series of papers \cite{asy2-1,asy2-2,asy2-3,asy2-4}.
Therefore, our study of the Lane-Emden system \eqref{sys1} in this paper can be seen as a generalization of these works to the system case.

To state our result, we need to introduce the following Liouville system which will appear as a limiting system of \eqref{sys1}
\be\lab{sys2}
	\bcs
		-\Delta U=e^V,\quad \text{in}~\R^2,\\
		-\Delta V=e^U,\quad \text{in}~\R^2,\\
		\int_{\R^2} e^U\rd x,\int_{\R^2} e^V\rd x<+\iy.
	\ecs
\ee
It was proved in \cite{classification} that for any solution of \eqref{sys2}, there must be
	$$U(x)=V(x)=\log\f{8\la^2}{(1+\la^2|x-y|^2)^2}$$
for some $\la>0$ and $y\in\R^2$. Denote $B_{r}(x)=\{y\in\R^2 \,|\, |y-x|<r\}$.

Here is our main result of this paper.

\bt\lab{thm1}
	Let $(u_p,v_p)$ be a family of positive solutions to \eqref{sys1} satisfying \eqref{para}-\eqref{con1}.
Then there exist $k\in\mathbb N\setminus\{0\}$ and a finite set of concentration points $\SR=\lbr{x_1,\cdots,x_k}\subset\Omega$ such that
	up to a subsequence, $(u_p,v_p)$ satisfies the following properties:
	for small $r>0$ and $i=1,\cdots,k$, set
		$$u_p(x_{i,p})=\max_{B_{2r}(x_i)}u_p\quad \text{and}\quad v_p(y_{i,p})=\max_{B_{2r}(x_i)}v_p,$$
	then
\begin{itemize}[fullwidth,itemindent=2em]
\item[(1)] 	$u_p(x_{i,p})\to\sqrt e$, $v_p(y_{i,p})\to\sqrt e$ as $p\to+\iy$. In particular, $\|u_p\|_{L^\infty(\Omega)}\to\sqrt e$ and $ \|v_p\|_{L^\infty(\Omega)}\to\sqrt e$.
\item[(2)]	$x_{i,p}\to x_i$, $y_{i,p}\to x_i$ and $\f{x_{i,p}-y_{i,p}}{\mu_{i,p}}\to0$  as $p\to+\iy$, where
				$$\mu_{i,p}^{-2}=pu_p^{p-1}(x_{i,p})\to+\iy.$$
\item[(3)]	as $p\to+\iy$,
 				$$pu_p(x),pv_p(x)\to 8\pi\sqrt e \sum_{i=1}^kG(x,x_i)\quad\text{in}~\CR_{loc}^2(\overline{\Omega}\setminus\SR),$$
 			where $G(x,y)$ is the Green function of $-\Delta$ in $\Omega$ under the Dirichlet boundary condition.
\item[(4)]	the concentration points $\{x_i\}_{i=1}^k$ satisfy
				$$\nabla R(x_i)-2\sum_{j=1,j\neq i}^k\nabla G(x_i,x_j)=0,\quad\text{for every $i=1,\cdots,k$},$$
			where $R(x)=H(x,x)$ is the Robin function and
				$H(x,y)=-\f{1}{2\pi}\log|x-y|-G(x,y)$
			is the regular part of $G(x,y)$.
\item[(5)]	let
			$$\bcs
				w_{i,p}(x)=\f{p}{u_p(x_{i,p})}(u_p(x_{i,p}+\mu_{i,p}x)-u_p(x_{i,p})),\\
				z_{i,p}(x)=\f{p}{u_p(x_{i,p})}(v_p(x_{i,p}+\mu_{i,p}x)-u_p(x_{i,p})),
			\ecs$$
			then $(w_{i,p},z_{i,p})\to (U_\q,V_\q)$ in $\CR_{loc}^2(\R^2)$ where $(U_\q+\f{\q}{2},V_\q)$ is a solution of the Liouville system \eqref{sys2} and
				$$U_\q(x)=\log\f{1}{(1+\f{1}{8}e^{\f{\q}{2}}|x|^2)^2},\quad
				V_\q(x)=\log\f{e^{\f{\q}{2}}}{(1+\f{1}{8}e^{\f{\q}{2}}|x|^2)^2},$$
			with $\q$ defined in \eqref{para}.
\item[(6)]  $p\int_\Omega\nabla u_p\cdot\nabla v_p\to k8\pi e$, $p\int_\Omega|\nabla u_p|^2\to k8\pi e$ and $p\int_\Omega|\nabla v_p|^2\to k8\pi e$ as $p\to+\iy$.
\end{itemize}
\et

\vskip 0.1in

\begin{remark} \
\begin{itemize}
\item[(1)] Our result is different from \cite{asy3-1,asy3-2} on two aspects. The first one is that comparing to the case $N\geq 3$, the limiting behavior for $N=2$ is quite different, and it turns out that $\nm{u_p}_\iy,\nm{v_p}_\iy\to\sqrt e$ and a suitable scaling of $(u_p,v_p)$ converges to a solution of the Liouville system \eqref{sys2}. The second one is that in contrast with \cite{asy3-1,asy3-2} where they only studied least energy solutions, here we also study high-energy positive solutions.

\item[(2)] Noting that in Theorem \ref{thm1}-(5), we scale $(u_p,v_p)$ at the local maximum points $x_{i,p}$ of $u_p$.
If we scale $(u_p,v_p)$ at the local maximum points $y_{i,p}$ of $v_p$, there is also a similar
convergence conclusion, see Remark \ref{converge4}. From condition \eqref{con1} and Theorem \ref{thm1}-(6), we see that
	$$k\le \mbr{\f{\beta}{8\pi e}}.$$
\end{itemize}
\end{remark}

Since we will prove in Section \ref{leastenergy} that the least energy solution $(u_p,v_p)$ of \eqref{sys1} satisfies
	$$\limsup_{p\to\infty}p\int_\Omega\nabla u_p\cdot\nabla v_p\le 8\pi e,$$
we immediately obtain

\bc\lab{thm2}
	The least energy solution $(u_p,v_p)$ of \eqref{sys1} must satisfy
		$$p\int_\Omega\nabla u_p\cdot\nabla v_p\to 8\pi e,\quad\text{as}~p\to\iy,$$
	and must behave as one-point concentration, i.e., $k=1$ in Theorem \ref{thm1}.
	Moreover, the concentration point $x_1$ is a critical point of the Robin function $R(x)$.
\ec

\vskip 0.1in
	In contrast with the single Lane-Emden equation \eqref{equ1}, since we deal with a system, we face additional difficulties.
The first one is how to construct the concentration set $\SR$.
For the single equation, to construct the concentration set, the most used idea is induction.
In precise, one can construct the concentration set following three steps (Let us take the single Lane-Emden equation $-\Delta u_p=u_p^p$ with $p\to\iy$ for example):
{\it
\vskip 0.1in
\begin{itemize}[fullwidth,itemindent=2em]
\item[ Step 1.]	One considers the global maximum point $x_{1,p}$ of $u_p$
	and proves that a suitable scaling of $u_p$ at $x_{1,p}$
	converges to a solution $U$ of the limiting equation $-\Delta U=e^U$ and $x_{1,p}\to x_1$, i.e. $x_1$ is a concentration point;

\vskip 0.05in
\item[ Step 2.]	Assume there exist $k$ families of points ${x_{i,p}}$, $1\leq i\leq k$, satisfying $\mu_{i,p}^{-2}=p u_{p}(x_{i,p})^{p-1}\to \infty$ and
    \begin{itemize}
    \item[$(\PR_k^1)$] $\lim_{p\to\infty}|x_{i,p}-x_{j,p}|/\mu_{j,p}=\infty$ for any $i\neq j$;
    \item[$(\PR_k^2)$] the scaling $p(u_p(x_{i,p}+\mu_{i,p}x)-u_p(x_{i,p}))/u_p(x_{i,p})$ converges to a solution $U$ of $-\Delta U=e^U$ and $x_{i,p}\to x_i$, i.e. $\SR_k=\lbr{x_1,\cdots,x_k}$ is a set of concentration points;
         \end{itemize}
       then one can prove that either the following $(\PR_k^3)$ holds or there exists another family of points $x_{k+1,p}$ satisfying $\mu_{k+1,p}^{-2}=p u_{p}(x_{k+1,p})^{p-1}\to \infty$, $x_{k+1,p}\to x_{k+1}$ such that $(\PR_{k+1}^1)$-$(\PR_{k+1}^2)$ hold
	for $\SR_{k+1}=\lbr{x_1,\cdots,x_{k+1}}$; where
\begin{itemize}
    \item[$(\PR_k^3)$]there exists $C>0$ such that
    \[\min_{1\leq i\leq k}|x-x_{i,p}|^2pu_p(x)^{p-1}\leq C,\quad\forall x\in \Omega.\]
    \end{itemize}

\vskip 0.05in
\item[ Step 3.]	By Steps 1-2, one can use the induction method. By the energy condition \eqref{con2},
	one shall prove that the induction must stop at finite steps, i.e., there exists a set $\SR_m=\{x_1,\cdots,x_m\}$ such that
	$(\PR_m^1)$-$(\PR_m^3)$ hold, but there is no other family of points $x_{m+1,p}$
	such that $(\PR_{m+1}^1)$-$(\PR_{m+1}^2)$ hold. Finally, one can prove that $\SR_m$ is the desired concentration set
	by $(\PR_m^1)$-$(\PR_m^3)$.
\end{itemize}
}
\vskip0.1in
\noindent
In early years, this idea was used to construct the bubble form decomposition of Palais-Smale sequence by Lions \cite{Lions-1,Lions-2}
and by Struwe \cite{Struwe-1} for high dimensions, which is the basis to study multi-bubble blow up behaviors of Lane-Emden equations
or Brezis-Nirenberg problems, see e.g. \cite{LiYY=1995,Kim=2016}.
After that, this idea was used to study multi-bubble blow up behaviors for many equations, such as
for two dimensional critical equations \cite{Druet-1,Druet-2},
for two dimensional Lane-Emden equations \cite{asy2-1,asy2-2},
for Lin-Ni's problem \cite{Druet-3}, etc.
However, for the Lane-Emden system \eqref{sys1}, it seems too difficult to find the proper induction conditions $(\PR_k^1)$, $(\PR_k^2)$ and $(\PR_k^3)$.
So we don't follow this idea to construct the concentration set $\SR$.
Instead, we will use PDE methods and the measure theory to prove that except for finite many points, $u_p,v_p$ converge to $0$ as $p\to\iy$.
Then we can conclude that the concentration set must be finite.
This is inspired by Wei and Ren's idea in \cite{asy1-1,asy1-2}, where they study the asymptotic behavior of least energy solution to Lane-Emden equation. It seems even new to construct the concentration set for single Lane-Emden equations.

\vskip0.1in
	Another difficulty is to compare two local maximums $\max_{B_{2r}(x_i)}u_p$ and $\max_{B_{2r}(x_i)}v_p$.
For $x_i\in\SR$, set
	$$u_p(x_{i,p})=\max_{B_{2r}(x_i)}u_p\quad \text{and}\quad v_p(y_{i,p})=\max_{B_{2r}(x_i)}v_p.$$
At first, we do not know whether to scale $(u_p,v_p)$ at $x_{i,p}$ or $y_{i,p}$, since we have no information about
the values of $\nm{u_p}_{L^\iy(B_{2r}(x_i))}$ and $\nm{v_p}_{L^\iy(B_{2r}(x_i))}$.
Moreover, noting that $q\ge p$, the roles of $u_p$ and $v_p$ in concentration are unequal,
which means that we can not directly assume $\nm{u_p}_{L^\iy(B_{2r}(x_i))}\ge \nm{v_p}_{L^\iy(B_{2r}(x_i))}$
or $\nm{u_p}_{L^\iy(B_{2r}(x_i))}\le \nm{v_p}_{L^\iy(B_{2r}(x_i))}$.
Fortunately, using the classification result of the Liouville system \eqref{sys2}, we can prove that
\be\lab{compare}
	\begin{aligned}
	0&\le \liminf_{p\to+\iy} p(\nm{v_p}_{L^\iy(B_{2r}(x_i))}-\nm{u_p}_{L^\iy(B_{2r}(x_i))})\\
	&\le \limsup_{p\to+\iy} p(\nm{v_p}_{L^\iy(B_{2r}(x_i))}-\nm{u_p}_{L^\iy(B_{2r}(x_i))})<+\iy,
	\end{aligned}
\ee
which promises the convergence in Theorem \ref{thm1}-(5); see Lemma \ref{converge3}.
We point out that our idea to handle the Lane-Emden system for $N=2$ is different from \cite{asy3-1,asy3-2} for $N\geq 3$.
In \cite{asy3-1,asy3-2}, since $p$ is fixed and $q_\e\to q$ in their assumption \eqref{12}, the Lane-Emden system is equivalent to a single equation
	$$\bcs 	
		-\Delta(-\Delta u_\e)^{\f{1}{p}}=u_\e^{q_\e},\quad u_\e>0,\quad\text{in}~\Omega,\\
		u_\e=\Delta u_\e=0,\quad\text{on}~\pa\Omega.		\ecs$$
Thus they can consider the scaling of $u_\e$, and then $v_\e$ is given by $v_\e=(-\Delta u_\e)^{\f{1}{p}}$.
Apparently, their idea is not suitable for our case $N=2$ due to $p,q\to+\iy$.

\vskip 0.1in
	This paper is organized as follows.
In Section 2, we give some preliminaries for the blow up analysis.
In Section 3, we prove the existence of a finite concentration set.
In Section 4, we use blow up techniques to prove Theorem \ref{thm1}.
Throughout the paper, we denote by $C, C_0, C_1, \cdots$ to be positive constants independent of $p$ but may be different in different places.

\vskip0.2in
\section{Preliminaries}

	First, we recall the Green function $G(x,y)$
of $-\Delta$ in $\Omega$ with the Dirichlet boundary condition:
\be
	\left\{		\begin{aligned}
	&-\Delta_x G(x,y)=\delta_y 	\quad &\text{in}~\Omega,\\
	&G(x,y)=0					\quad &\text{on}~\pa\Omega,
	\end{aligned} 	\right.
\ee
where $\delta_y$ is the Dirac function. It has the following form $$G(x,y)=-\f{1}{2\pi}\log|x-y|-H(x,y),\quad(x,y)\in\Omega\times\Omega,$$
where the function $H(x,y)$ is the regular part of $G(x,y)$.
It is well known that $H$ is a smooth function in $\Omega\times\Omega$, both
$G$ and $H$ are symmetric in $x$ and $y$, and there is some constant $C>0$ such that
\be|G(x,y)|\leq C|\log|x-y||,\qquad\forall \, x,y\in \Omega.\ee
 Let $R(x)=H(x,x)$
be the Robin function of $\Omega$.

For later usage, we need the following Pohozaev identities.

\bl\lab{Pohozaev}
	For any solution $(u,v)$ of \eqref{sys1}, $\Omega'\subset\Omega$ and $y\in\R^2$, we have
	\be\lab{pho1}
		\begin{aligned}
			&\f{2}{p+1}\int_{\Omega'}v^{p+1}\rd x+\f{2}{q+1}\int_{\Omega'}u^{q+1}\rd x\\
=&\int_{\pa\Omega'}\abr{\nabla u,\nu}\abr{\nabla v,x-y}+\abr{\nabla v,\nu}\abr{\nabla u,x-y}\rd\sigma_x\\
			&-\int_{\pa\Omega'}\abr{\nabla u,\nabla v}\abr{x-y,\nu}\rd \sigma_x	+\int_{\pa\Omega'}\abr{x-y,\nu}\sbr{\f{v^{p+1}}{p+1}+\f{u^{q+1}}{q+1}}\rd\sigma_x,
		\end{aligned}
	\ee
	\be\lab{pho2}
		\begin{aligned}
			-\int_{\pa\Omega'}\abr{\nabla u,\nu}\pa_iv+\abr{\nabla v,\nu}\pa_iu\rd\sigma_x
			+\int_{\pa\Omega'}\abr{\nabla u,\nabla v}\nu_i\rd \sigma_x
			=\int_{\pa\Omega'}\sbr{\f{v^{p+1}}{p+1}+\f{u^{q+1}}{q+1}}\nu_i\rd\sigma_x,
		\end{aligned}
	\ee	
	where $\nu=(\nu_1,\nu_2)$ is the outer normal vector of $\pa\Omega'$, and $i=1,2$.
\el
\bp
	By direct computations, we have
	$$\begin{aligned}
		&\quad -\Delta u(x-y)\cdot\nabla v-\Delta v(x-y)\cdot\nabla u\\
		&=-\operatorname{div}\sbr{\nabla u(x-y)\cdot\nabla v+\nabla v(x-y)\cdot\nabla u}
			+\nabla u\cdot\nabla \sbr{(x-y)\cdot\nabla v}+\nabla v\cdot\nabla \sbr{(x-y)\cdot\nabla u}\\
		&=-\operatorname{div}\sbr{\nabla u(x-y)\cdot\nabla v+\nabla v(x-y)\cdot\nabla u-(x-y)\nabla u\cdot\nabla v}.
	\end{aligned}$$
On the other hand,
	$$\begin{aligned}
		v^p(x-y)\cdot\nabla v+u^q(x-y)\cdot\nabla u =\operatorname{div}\sbr{(x-y)\Big(\f{v^{p+1}}{p+1}+\f{u^{q+1}}{q+1}\Big)}-2\sbr{\f{v^{p+1}}{p+1}+\f{u^{q+1}}{q+1}}.
	\end{aligned}$$
Then using \eqref{sys1} and the divergence theorem, we obtain \eqref{pho1}.

	Similarly, we have
	$$-\Delta u\pa_iv-\Delta v\pa_iu=-\operatorname{div}(\nabla u\pa_iv+\nabla v\pa_iu)+\pa_i(\nabla u\cdot\nabla v),$$	
and
	$$v^p\pa_iv+u^q\pa_iu=\pa_i\sbr{\f{v^{p+1}}{p+1}+\f{u^{q+1}}{q+1}}.$$
Then using \eqref{sys1} and the divergence theorem, we obtain \eqref{pho2}.
\ep

	Since $(u_p,v_p)$ satisfies \eqref{con1}, we have
\be
	p\int_\Omega u_p^{q+1}=p\int_\Omega v_p^{p+1}=p\int_\Omega\nabla u_p\cdot\nabla v_p\le \beta+o_p(1),
\ee
where $o_p(1)$ denotes those quantities satisfying $\lim_{p\to \infty}o_p(1)=0$.
Then by H\"older inequality,
\be\lab{est-v-1}
	\limsup_{p\to\iy} p\int_\Omega v_p^p
	\le \limsup_{p\to\iy} \sbr{p\int_\Omega v_p^{p+1}}^{\f{p}{p+1}}(p|\Omega|)^{\f{1}{p+1}}
	\le \beta,
\ee
\be\lab{est-u-1}
	\limsup_{p\to\iy} p\int_\Omega u_p^q
	\le \limsup_{p\to\iy} \sbr{p\int_\Omega u_p^{q+1}}^{\f{q}{q+1}}(p|\Omega|)^{\f{1}{q+1}}
	\le \beta.
\ee
Now we prove that $u_p,v_p$ are uniformly bounded as $p\to+\infty$.
For simplicity, we denote $\nm{u}_\iy=\nm{u}_{L^\iy(\Omega)}$.
\bl\lab{est-1}
	There exists a constant $C>0$ such that
	\be
		1\le\liminf_{p\to\iy}\nm{u_p}_\iy\le \limsup_{p\to\iy}\nm{u_p}_\iy\le C,
	\ee
	\be
		1\le\liminf_{p\to\iy}\nm{v_p}_\iy\le \limsup_{p\to\iy}\nm{v_p}_\iy\le C.
	\ee
\el
\bp
	Let $\vp_1>0$, $\nm{\vp_1}_\iy=1$, be the eigenfunction with respect
to the first eigenvalue $\la_1(\Omega)$ of $-\Delta$ in $\Omega$ with the Dirichlet boundary condition.
Then we have
$$
	\begin{aligned}
		&\quad \int_\Omega (v_p^{p-1}-\la_1(\Omega))v_p\vp_1+\int_\Omega (u_p^{q-1}-\la_1(\Omega))u_p\vp_1\\
		&=\int_\Omega (-\Delta u_p\vp_1+\Delta\vp_1 v_p)+(-\Delta v_p\vp_1+\Delta\vp_1 u_p)\\
		&=\int_\Omega (\nabla u_p\cdot\nabla \vp_1-\nabla v_p\cdot\nabla \vp_1)
					+(\nabla v_p\cdot\nabla \vp_1-\nabla u_p\cdot\nabla \vp_1)\\
		&=0.
	\end{aligned}
$$
So one of $\int_\Omega (v_p^{p-1}-\la_1(\Omega))v_p\vp_1$ and $\int_\Omega (u_p^{q-1}-\la_1(\Omega))u_p\vp_1$
is nonnegative. Without loss of generality, we assume $\int_\Omega (v_p^{p-1}-\la_1(\Omega))v_p\vp_1\ge0$,
which implies $\nm{v_p}_\iy\ge \la_1(\Omega)^{\f{1}{p-1}}$.
Meanwhile, by the Green representation formula,
\begin{align*}
v_p(x)=\int_{\Omega}G(x,y)u_p(y)^qdy\leq \nm{u_p}_\iy^q\int_{\Omega}|G(x,y)|dy\leq C\nm{u_p}_\iy^q,
\end{align*}
where $C$ is independent of $x\in\Omega$. Thus
	$$\nm{u_p}_\iy\ge (C^{-1}\nm{v_p}_\iy)^{\f{1}{q}}\ge C^{\f{-1}{q}}\la_1(\Omega)^{\f{1}{q(p-1)}}.$$
As a result, we obtain $\liminf_{p\to\iy}\nm{u_p}_\iy\ge1$ and $\liminf_{p\to\iy}\nm{v_p}_\iy\ge1$.

	To prove the second part, we recall \cite[Proposition 2.7]{asy1-4} that there is $C>0$ independent of $x\in\Omega$ such that $$\nm{G(x,\cdot)}_{L^{p+1}(\Omega)}^{p+1}\leq C(p+1)^{p+2},\quad \text{for $p$ large enough}.$$
Then
by the Green representation formula, for any $x\in\Omega$,
$$
	\begin{aligned}
		u_p(x)
		&=\int_\Omega G(x,y)v_p(y)^p\rd y\le \nm{G(x,\cdot)}_{L^{p+1}(\Omega)}\nm{v_p}_{L^{p+1}(\Omega)}^p\\
		&\le C(p+1)^{\f{p+2}{p+1}}\sbr{\f{C}{p}}^{\f{p}{p+1}}\le C,\quad\text{for $p$ large enough},
	\end{aligned}
$$
so $\limsup_{p\to\iy}\nm{u_p}_\iy\le C$.
Similarly, we can prove $\limsup_{p\to\iy}\nm{v_p}_\iy\le C$.
\ep

\section{Existence of the concentration set}
	
\subsection{The first concentration point}	
	We introduce some notations. Let $x_p,y_p\in\Omega$ be such that $u_p(x_p)=\nm{u_p}_\iy$ and $v_p(y_p)=\nm{v_p}_\iy$.
Take a subsequence if necessary, we assume
\be\lab{limit-1}
	x_p\to x_0,\quad y_p\to y_0,\quad\text{as}~p\to\iy,
\ee
and
\be\lab{limit-2}
	u_p(x_p)\to l,\quad v_p(y_p)\to\tilde l,\quad\text{as}~p\to\iy.
\ee
Then $x_0,y_0\in\bar\Omega$ and $l,\tilde l\ge1$. Let
\be
	\mu_p^{-2}=pu_p^{p-1}(x_p),
\ee
by Lemma \ref{est-1}, we know that $\mu_p\to0$.

	First, we prove that the maximum points $x_p,y_p$ are uniformly away from the boundary of $\Omega$.
\bl\lab{boundary}
	There exists $\delta_0>0$ independent of $p$ such that for $p$ large,
		$$dist(x_p,\pa\Omega),~ dist(y_p,\pa\Omega)\ge\delta_0.$$
\el
\bp
	We set
\be
	\bar u_p=\f{u_p}{\int_\Omega v_p^p},\quad \bar v_p=\f{v_p}{\int_\Omega u_p^q},
\ee
and
\be
	f_p=\f{v_p^p}{\int_\Omega v_p^p}, \quad g_p=\f{u_p^q}{\int_\Omega u_p^q}.
\ee
Then $\nm{f_p}_{L^1(\Omega)}=\nm{g_p}_{L^1(\Omega)}=1$ and
\be\lab{sys3}
	\bcs
		-\Delta\bar u_p=f_p,\quad \bar u_p>0,\quad\text{in}~\Omega,\\
		-\Delta\bar v_p=g_p,\quad \bar v_p>0,\quad\text{in}~\Omega,\\
		\bar u_p=\bar v_p=0,\quad\text{on}~\pa\Omega.
	\ecs
\ee
Since $\nm{f_p}_{L^1(\Omega)}=\nm{g_p}_{L^1(\Omega)}=1$, it follows from the elliptic $L^p$ estimate with the duality argument
(cf. \cite{p-estimate}) that $\bar u_p,\bar v_p$ are uniformly bounded in $W^{1,s}(\Omega)$
for any $1\le s<2$. In particular,
\begin{equation}\label{dd}\nm{\bar u_p}_{L^s(\Omega)}, \nm{\bar v_p}_{L^s(\Omega)}\leq C_s,\quad 1\leq s<2.\end{equation}

If $\Omega$ is a convex domain, by the moving plane method,
it was proved in \cite[Section 31.1, (31.19)]{book-1} that there exists a constant $\la_0>0$ such that
	$$\abr{\nabla\bar u_p(x),\nu(y)}\le 0, ~~\abr{\nabla\bar v_p(x),\nu(y)}\le 0, \quad\text{for}\quad y\in\pa\Omega,~x\in\Sigma(y,\la_0),$$
where $\Sigma(y,\la):=\lbr{x\in\Omega:~\abr{y-x,\nu(y)}\le\la}$ and $\nu(y)$ denotes the outer normal vector of $\partial \Omega$ at $y$.
Then by \cite[Lemma 13.2]{book-1}, there exist $\delta_0, C>0$ depending only on $\Omega$ and $\lambda_0$ such that
	$$\nm{\bar u_p}_{L^\iy(\omega)}\le C\int_\Omega\bar u_p\vp_1\rd x\le C\nm{\bar u_p}_{L^1(\Omega)}\le C,$$
$$\nm{\bar v_p}_{L^\iy(\omega)}\le C\int_\Omega\bar v_p\vp_1\rd x\le C\nm{\bar v_p}_{L^1(\Omega)}\le C,$$
where $\vp_1$ is given in Lemma \ref{est-1} and $$\omega=\lbr{x\in\Omega:~dist(x,\pa\Omega)\le \delta_0}.$$
While for $\Omega$ being not convex, as pointed out in \cite[Remark 31.5(ii)]{book-1}, by using the Kelvin transform we can also obtain
	\be\label{3-233}\nm{\bar u_p}_{L^\iy(\omega)},~\nm{\bar v_p}_{L^\iy(\omega)}\le C.\ee
As a result, we see from \eqref{est-v-1}-\eqref{est-u-1} that
\begin{equation}\label{boundary-1}
	\nm{u_p}_{L^\iy(\omega)}\le C\int_\Omega v_p^p\le \f{C}{p},
\quad
	\nm{v_p}_{L^\iy(\omega)}\le C\int_\Omega u_p^q\le \f{C}{p}.
\end{equation}
This, together with $\liminf_{p\to\iy}u_p(x_p),\liminf_{p\to\iy}v_p(y_p)\ge 1$, implies $x_p,y_p\not\in\omega$ for $p$ large.
\ep


Define the scaling of $u_p,v_p$ at $x_p$ by
\be
	\begin{aligned}
	w_p(x)&=\f{p}{u_p(x_p)}\sbr{u_p(x_p+\mu_px)-u_p(x_p)},\\
	z_p(x)&=\f{p}{u_p(x_p)}\sbr{v_p(x_p+\mu_px)-u_p(x_p)},
	\end{aligned}\quad\quad x\in\Omega_p:=\f{\Omega-x_p}{\mu_p}.
\ee
Lemma \ref{boundary} implies $x_0,y_0\in\Omega$ and so $\Omega_p\to\R^2$ as $p\to\iy$.
It is easy to check that
	$$1+\f{w_p(x)}{p}=\f{u_p(x_p+\mu_px)}{u_p(x_p)},\quad 1+\f{z_p(x)}{p}=\f{v_p(x_p+\mu_px)}{u_p(x_p)},$$
and (note $\mu_p^{-2}=pu_p^{p-1}(x_p)$ and $q=p+\theta_p$)
\be
	\bcs
		-\Delta w_p=(1+\f{z_p}{p})^p, \quad\text{in}~\Omega_p,\\
		-\Delta z_p=u_p(x_p)^{\q_p}(1+\f{w_p}{p})^q, \quad\text{in}~\Omega_p,\\
		w_p=z_p=-p,\quad\text{on}~\pa\Omega_p.
	\ecs
\ee
Moreover, $$w_p(0)=0=\max_{\Omega_p}w_p.$$

As pointed out in Section 1, one difficulty is the lack of the relationship between the values $u_p(x_p)=\|u_p\|_{\infty}$ and $v_p(y_p)=\|v_p\|_{\infty}$ at the moment.
To settle it, we give the following important lemma.

\bl\lab{converge1}
	Suppose $\limsup_{p\to\iy}p(v_p(y_p)-u_p(x_p))\le C$ for some $C\ge 0$. Then up to a subsequence of $p\to\infty$,
		$$w_p\to \bu,\quad z_p\to \bv,\quad\text{in}~\CR_{loc}^2(\R^2),$$
	where $l=\lim_{p\to\infty}u_p(x_p)$, $(\bu+\q\log l,\bv)$ is a solution of \eqref{sys2} and
	\be
		\bar U_\q(x)=\log\f{1}{(1+\f{1}{8}l^{\q}|x|^2)^2},\quad
		\bar V_\q(x)=\log\f{l^{\q}}{(1+\f{1}{8}l^{\q}|x|^2)^2}.
	\ee
\el
\bp
	For any $R>0$, we have $B_R(0)\subset\Omega_p$ for $p$ large.
Take $w_p=w_{p,1}+w_{p,2}$ in $B_R(0)$ with
$$
	\bcs
		-\Delta w_{p,1}=(1+\f{z_p}{p})^p,\quad\text{in}~B_R(0),\\
		w_{p,1}=0,\quad\text{on}~\pa B_R(0),
	\ecs
$$
and
$$
	\bcs
		-\Delta w_{p,2}=0,\quad\text{in}~B_R(0),\\
		w_{p,1}=w_p,\quad\text{on}~\pa B_R(0).
	\ecs
$$
Since $\limsup_{p\to\iy}p(v_p(y_p)-u_p(x_p))\le C$ and $\liminf_{p\to\iy}u_p(x_p)\geq 1$ imply $z_p(x)\leq C$, we obtain
	$$0\le \Big(1+\f{z_p}{p}\Big)^p\le \Big(1+\f{C}{p}\Big)^p\le C,$$
which means $0\le -\Delta w_{p,1}\le C$.
Thus $\nm{w_{p,1}}_{L^\iy(B_R(0))}\le C$ and $w_{p,2}=w_p-w_{p,1}\le C$.
By Harnack inequality, we see that if $\inf_{B_R(0)}w_{p,2}\to-\iy$, then $\sup_{B_R(0)}w_{p,2}\to-\iy$ as $p\to\iy$,
which contradicts to $w_{p,2}(0)=w_p(0)-w_{p,1}(0)\ge -C$.
So we have $\nm{w_{p,2}}_{L^\iy(B_R(0))}\le C$, and hence $\nm{w_p}_{L^\iy(B_R(0))}\le C$.
Then by elliptic estimates (see \cite[Theorems 3.9 and 8.8]{book-2}), we have
	$$\nm{\nabla w_p}_{L^\iy(B_R(0))}\le C,\quad\text{and}\quad \nm{w_p}_{W^{2,2}(B_R(0))}\le C.$$
Up to a subsequence, we can assume $w_p\to w$ in $\CR(B_R(0))$ and $w_p\rh w$ in $W^{2,2}(B_R(0))$.

Similarly, we take $z_p=z_{p,1}+z_{p,2}$ with $z_{p,1}=0$ on the boundary $\pa B_R(0)$ and $z_{p,2}$ is harmonic in $B_R(0)$.
Since $u_p(x_p)\to l$ and $\q_p\to\q$, we have $0\le -\Delta z_{p,1}=u_p(x_p)^{\q_p}(1+\f{w_p}{p})^q\le C$, which yields $\nm{z_{p,1}}_{L^\iy(B_R(0))}\le C$.
From here and $z_p\le C$, we obtain $z_{p,2}=z_p-z_{p,1}\le C$. We will prove by contradiction that $z_{p,2}\ge -C$.
Suppose $\inf_{B_R(0)}z_{p,2}\to-\iy$ as $p\to\infty$, then $\sup_{B_R(0)}z_{p,2}\to-\iy$ by Harnack inequality.
This implies $\sup_{B_R(0)}z_p\le \sup_{B_R(0)}z_{p,1}+\sup_{B_R(0)}z_{p,2}\to-\iy$,
so $\sup_{B_R(0)}(1+\f{z_p}{p})^p\to0$, i.e., $\nm{\Delta w_p}_{L^\iy(B_R(0))}\to0$ as $p\to\iy$.
Thus in the distribution sense, $-\Delta w=0$ in $B_R(0)$.
By choosing $R=R_n\to\iy$, repeating the above process, we can obtain that up to a subsequence,
$w_p\to w$ in $\CR_{loc}(\R^2)$ and $w_p\rh w$ in $W^{2,2}_{loc}(\R^2)$, where
	$$-\Delta w=0\quad\text{in}~\R^2,\quad w\le w(0)=0.$$
By Liouville's Theorem, $w\equiv 0$. However, by Fatou's Lemma,
\begin{align}\lab{tem-1}
	\int_{\R^2}e^w
	&\le \liminf_{p\to\iy}\int_{\Omega_p}\Big(1+\f{w_p}{p}\Big)^p\rd x
	=\liminf_{p\to\iy}\f{p}{u_p(x_p)}\int_{\Omega}u_p^p\rd x\nonumber\\
&\le \limsup_{p\to\iy}\Big(p\int_{\Omega}u_p^{q+1}\Big)^{\f{p}{q+1}}(p|\Omega|)^{\f{\theta_p+1}{q+1}}\le C,
\end{align}
which contradicts to $w=0$.
So we prove that $\inf_{B_R(0)}z_{p,2}\ge -C$ and then $\nm{z_p}_{L^\iy(B_R(0))}\le C$.
Then by elliptic estimates, we have $\nm{\nabla z_p}_{L^\iy(B_R(0))}\le C$ and $\nm{z_p}_{W^{2,2}(B_R(0))}\le C$.

Since $R>0$ is arbitrary, the above argument shows that $w_p, z_p$ are uniformly bounded in $W_{loc}^{2,2}(\R^2)\cap \CR_{loc}^{1}(\R^2)$. Then by the Schauder estimates, we conclude that $w_p, z_p$ are uniformly bounded in $\CR_{loc}^{2,\alpha}(\R^2)$, and so
 up to a subsequence, we may assume
$w_p\to w$ and $z_p\to z$ in $\CR_{loc}^2(\R^2)$ with
$$
	\bcs
	-\Delta w=e^z,\quad\text{in}~\R^2,\\
	-\Delta z=l^\q e^w,\quad\text{in}~\R^2.
	\ecs
$$
Similar to \eqref{tem-1}, we have $\int_{\R^2}e^w<\iy$ and $\int_{\R^2}e^z<\iy$.
Thus $(w+\q\log l,z)$ is a solution of \eqref{sys2}, and hence
	$$w(x)+\q\log l=z(x)=\log\f{8\la^2}{(1+\la^2|x-y|^2)^2}$$
for some $\la>0$ and $y\in\R^2$.
Finally, by $w(0)=0=\max_{\R^2} w$, we obtain $y=0$ and $\lambda^2=\frac18 l^{\theta}$, so $w=\bu$ and $z=\bv$. The proof is complete.
\ep

\br\lab{converge2}
	Let $\tilde \mu_p^{-2}=pv_p^{p-1}(y_p)\to\iy$ and
define the scaling of $u_p,v_p$ at $y_p$ by
\be
	\begin{aligned}
	\tilde w_p(x)&=\f{p}{v_p(y_p)}\sbr{u_p(y_p+\tilde \mu_px)-v_p(y_p)},\\
	\tilde z_p(x)&=\f{p}{v_p(y_p)}\sbr{v_p(y_p+\tilde \mu_px)-v_p(y_p)},
	\end{aligned}\quad\quad x\in\tilde\Omega_p:=\f{\Omega-y_p}{\tilde\mu_p}.
\ee
By Lemma \ref{boundary}, we see $\tilde\Omega_p\to\R^2$ as $p\to\iy$.
It is easy to check that
	$$1+\f{\tilde w_p(x)}{p}=\f{u_p(y_p+\tilde\mu_px)}{v_p(y_p)},
		\quad 1+\f{\tilde z_p(x)}{p}=\f{v_p(y_p+\tilde\mu_px)}{v_p(y_p)},$$
and
\be
	\bcs
		-\Delta\tilde w_p=(1+\f{\tilde z_p}{p})^p, \quad\text{in}~\tilde\Omega_p,\\
		-\Delta\tilde z_p=v_p(y_p)^{\q_p}(1+\f{\tilde w_p}{p})^q, \quad\text{in}~\tilde\Omega_p,\\
		\tilde w_p=\tilde z_p=-p,\quad\text{on}~\pa\tilde\Omega_p.
	\ecs
\ee
Moreover, $\tilde z_p(0)=0=\max_{\tilde \Omega_p}\tilde z_p$. Suppose $\limsup_{p\to\iy}p(u_p(x_p)-v_p(y_p))\le C$ for some $C\ge 0$. Then $\tilde w_p\leq C$ in $\tilde\Omega_p$. Consequently,
we can follow the proof of Lemma \ref{converge1} and obtain that up to a subsequence,
		$$\tilde w_p\to \tilde U_\q,\quad \tilde z_p\to \tilde V_\q,\quad\text{in}~\CR_{loc}^2(\R^2),\quad\text{as}~p\to\iy,$$
	where $\tilde{l}:=\lim_{p\to\infty}v_p(y_p)$, $(\tilde U_\q+\q\log \tilde l,\tilde V_\q)$ is a solution of \eqref{sys2} and
	\be
		\tilde U_\q(x)=\log\f{\tilde l^{-\q}}{(1+\f{1}{8}|x|^2)^2},\quad
		\tilde V_\q(x)=\log\f{1}{(1+\f{1}{8}|x|^2)^2}.
	\ee
\er

\vskip0.2in

	Now we compare the maximum values of $u_p,v_p$.
\bl\lab{compare1}
	There exists a constant $C\ge 0$ such that
		$$0\le \liminf_{p\to+\iy} p(v_p(y_p)-u_p(x_p))\le \limsup_{p\to+\iy} p(v_p(y_p)-u_p(x_p))\le C.$$
\el
\bp
	Assume by contradiction that $p(v_p(y_p)-u_p(x_p))\to d<0$ up to a subsequence, where $d$ might be $-\infty$.
Applying Lemma \ref{converge1}, we see that $w_p\to\bu$ and $z_p\to\bv$ in $\CR^2_{loc}(\R^2)$ as $p\to\iy$.
Then
	$$\max z_p\ge z_p(0)\to \bv(0)=\q\log l\ge 0.$$
However,
	$$\max z_p=\f{p(v_p(y_p)-u_p(x_p))}{u_p(x_p)}\to \f{d}{l}<0,$$
which is a contradiction. This proves $\liminf_{p\to+\iy} p(v_p(y_p)-u_p(x_p))\ge 0$.

Similarly, to prove $\limsup_{p\to+\iy} p(v_p(y_p)-u_p(x_p))\leq C$, we assume by contradiction that $p(v_p(y_p)-u_p(x_p))\to+\infty$ up to a subsequence. Then
we can apply
Remark \ref{converge2}, which asserts that
$\tilde w_p\to\tilde U_\q$ and $\tilde z_p\to\tilde V_\q$ in $\CR^2_{loc}(\R^2)$ up to a subsequence.
Consequently,
	$$-\infty\leftarrow\f{p(u_p(x_p)-v_p(y_p))}{v_p(y_p)}=\max\tilde w_p\ge \tilde w_p(0)\to \tilde U_\q(0)=-\q\log\tilde l,$$
again a contradiction.
\ep

\br
	Lemma \ref{compare1} shows that the assumptions in Lemma \ref{converge1} and Remark \ref{converge2} hold automatically, so the conclusions of both Lemma \ref{converge1} and Remark \ref{converge2} always hold, and $l=\tilde l$.
\er

\bl\lab{est-6}
	It holds	
		$$\f{C_1}{p}\le \int_\Omega u_p^q,~\int_\Omega v_p^p\le \f{C_2}{p},$$
	for $p$ large enough, where $C_1,C_2>0$ .
\el
\bp
	From \eqref{est-v-1}-\eqref{est-u-1}, we have
$\int_\Omega u_p^q,~\int_\Omega v_p^p\le \f{C_2}{p}$ for $p$ large.
On the other hand, it follows from Lemma \ref{converge1} that
	$$\liminf_{p\to\iy}p\int_\Omega u_p^q
		=\liminf_{p\to\iy}u_p(x_p)^{\q_p+1}\int_{\Omega_p}\Big(1+\f{w_p}{p}\Big)^q
		\ge l^{\q+1}\int_{\R^2}e^{\bu}=8\pi l,$$
	$$\liminf_{p\to\iy}p\int_\Omega v_p^p
		=\liminf_{p\to\iy}u_p(x_p)\int_{\Omega_p}\Big(1+\f{z_p}{p}\Big)^p
		\ge l\int_{\R^2}e^{\bv}=8\pi l,$$
which means $\int_\Omega u_p^q,~\int_\Omega v_p^p\ge \f{C_1}{p}$ for $p$ large.
\ep

\begin{corollary}\lab{est-6-0}
Define
$$L_p:=\max\left\{\frac{p\int_{\Omega}u_p^q}{e(\int_{\Omega}v_p^p)^{1/p}},
\frac{q\int_{\Omega}v_p^p}{e(\int_{\Omega}u_p^q)^{1/q}}\right\}\quad\text{and}\quad L:=\limsup_{p\to\infty}L_p.$$
Then $\frac{8\pi l}{e}\leq L\leq \frac{\beta}{e}$.
\end{corollary}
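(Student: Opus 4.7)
The corollary should follow almost at once from bounds already in hand; the plan is essentially bookkeeping. The key normalising observation is that both
$$\Big(\int_\Omega v_p^p\Big)^{1/p}\to 1\quad\text{and}\quad \Big(\int_\Omega u_p^q\Big)^{1/q}\to 1\quad\text{as}~p\to\infty,$$
because Lemma \ref{est-6} pins $\int_\Omega v_p^p$ and $\int_\Omega u_p^q$ between $C_1/p$ and $C_2/p$, and $(C/p)^{1/p}\to 1$. Hence each denominator appearing in $L_p$ tends to $e$, and the whole question reduces to estimating the numerators $p\int_\Omega u_p^q$ and $q\int_\Omega v_p^p$.

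\textbf{Upper bound.} Estimate \eqref{est-u-1} already gives $\limsup_{p\to\infty} p\int_\Omega u_p^q\le\beta$. For $q\int_\Omega v_p^p$ I would split $q=p+\q_p$: by \eqref{est-v-1} the piece $p\int_\Omega v_p^p$ is $\le\beta+o_p(1)$, while \eqref{para} forces $\sup_p\q_p<\infty$ so Lemma \ref{est-6} yields $\q_p\int_\Omega v_p^p\le C\q_p/p=o_p(1)$. Combined with the normalising step this gives $L_p\le\beta/e+o_p(1)$, hence $L\le\beta/e$.

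\textbf{Lower bound.} Here I would simply reuse the Fatou computation carried out in the proof of Lemma \ref{est-6}. Since Lemma \ref{converge1} supplies $w_p\to\bu$ in $\CR^2_{loc}(\R^2)$ along the subsequence that defines $l$, Fatou's lemma yields
$$\liminf_{p\to\infty}p\int_\Omega u_p^q=\liminf_{p\to\infty}u_p(x_p)^{\q_p+1}\int_{\Omega_p}\Big(1+\tfrac{w_p}{p}\Big)^q \ge l^{\q+1}\int_{\R^2}e^{\bu}=8\pi l.$$
Dividing by the first denominator (which tends to $e$) bounds the first entry in the max from below by $8\pi l/e+o_p(1)$ along this subsequence, so $L=\limsup_{p\to\infty} L_p\ge 8\pi l/e$.

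\textbf{Main obstacle.} There is no real obstacle: the hard work is already packed into \eqref{est-v-1}--\eqref{est-u-1}, Lemma \ref{converge1}, and Lemma \ref{est-6}. The only point that needs a small verification is that replacing $p$ by $q$ in $q\int_\Omega v_p^p$ does not inflate the upper bound, which is precisely where the hypothesis $\sup_p\q_p<+\infty$ from \eqref{para} is invoked, via Lemma \ref{est-6}.
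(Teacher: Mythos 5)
Your proof is correct and takes essentially the same route as the paper, which simply declares the corollary a direct consequence of the proof of Lemma \ref{est-6}: that proof already supplies the two-sided bound $C_1/p\le\int_\Omega u_p^q,\int_\Omega v_p^p\le C_2/p$ (hence the denominators $e(\int v_p^p)^{1/p}$, $e(\int u_p^q)^{1/q}\to e$), the Fatou lower bound $\liminf p\int_\Omega u_p^q\ge 8\pi l$, and the upper bound $\limsup p\int_\Omega u_p^q,\limsup p\int_\Omega v_p^p\le\beta$. Your extra step handling $q=p+\theta_p$ in the second entry of the max, using $\sup_p\theta_p<\infty$ to conclude $\theta_p\int_\Omega v_p^p=o_p(1)$, is exactly the small verification the paper leaves implicit, so the two arguments coincide.
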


\begin{proof}
This is a direct consequence of the proof of Lemma \ref{est-6}.
\end{proof}

\subsection{Finiteness of the concentration points}
	Let $\bar u_p,\bar v_p$ and $f_p,g_p$ be defined as in the proof Lemma \ref{boundary}. 
Since $\nm{f_p}_{L^1(\Omega)}=\nm{g_p}_{L^1(\Omega)}=1$, we assume (up to a subsequence) that
	$$f_p\rh \mu,\quad g_p\rh\nu,\quad\text{weakly in}~\MR(\Omega),\quad\text{as}~p\to\iy,$$
where $\MR(\Omega)$ is the space of Radon measures. Obviously, $\mu(\Omega)=\nu(\Omega)=1$.
	
	For any $\delta>0$, we say a point $x_*\in\Omega$ is a $\delta$-regular point with respect to $\mu$ (resp. $\nu$),
if there exists a $\vp\in\CR_0(\Omega)$ such that $0\le\vp\le 1$, $\vp\equiv1$ near $x_*$ and
	$$\int_\Omega\vp\rd\mu<\f{4\pi}{L+2\delta} \quad
		\sbr{\text{resp.}\int_\Omega\vp\rd\nu<\f{4\pi}{L+2\delta}},$$
where $L$ is defined in Corollary \ref{est-6-0}.
Denote
	$$\Sigma_\mu(\delta):=\lbr{x\in\Omega:~x~\text{is not a $\delta$-regular point w.r.t. $\mu$}~},$$
	$$\Sigma_\nu(\delta):=\lbr{x\in\Omega:~x~\text{is not a $\delta$-regular point w.r.t. $\nu$}~}.$$
Before proceeding our discussion, we quote a $L^1$ estimates from \cite{1-estimate}.
\bl[\cite{1-estimate}]\lab{tem-2}
	Let $u$ be a solution of
	$$\bcs
		-\Delta u=f\quad\text{in}~\Omega,\\
		u=0\quad\text{on}~\pa\Omega,
	\ecs$$
	where $\Omega$ is a smooth bounded domain in $\R^2$. Then for any $0<\e<4\pi$, we have
		$$\int_{\Omega}\exp\sbr{\f{(4\pi-\e)|u(x)|}{\nm{f}_{L^1(\Omega)}}}\rd x\le \f{4\pi^2}{\e}(\mathrm{diam}~\Omega)^2.$$
\el

We give an equivalent characterization of the sets $\Sigma_\mu(\delta)$ and $\Sigma_\nu(\delta)$, whose proof is inspired by Wei and Ren's idea in \cite{asy1-1,asy1-2}.
\bl\lab{equiv-1}
	For any $\delta>0$, we have
	\begin{itemize}[fullwidth,itemindent=2em]
	\item[(1)]	$x_*\in\Sigma_\mu(\delta)$ if and only if for any $R>0$,
					it holds $\nm{\bar v_p}_{L^\iy(B_R(x_*))}\to+\iy$ as $p\to\iy$;
	\item[(2)]	$x_*\in\Sigma_\nu(\delta)$ if and only if for any $R>0$,
					it holds $\nm{\bar u_p}_{L^\iy(B_R(x_*))}\to+\iy$ as $p\to\iy$.
	\end{itemize}
Consequently, neither $\Sigma_\mu(\delta)$ nor $\Sigma_\nu(\delta)$ depend on the choice of $\delta$.
\el
\bp
	We only prove (1), since (2) can be proved similarly.
First, take $x_*\not\in\Sigma_\mu(\delta)$, we want to prove that there exists $R_0>0$ such that
$\nm{\bar v_p}_{L^\iy(B_{R_0}(x_*))}\le C$ as $p\to\iy$.
To this goal, we recall \eqref{dd} in the proof of Lemma \ref{boundary} that $\nm{\bar v_p}_{L^{3/2}(\Omega)}\le C$.
We claim that  there exist small $R_0>0$ and $\delta_0>0$ such that
\be\lab{claim-1}
	\nm{g_p}_{L^{1+\delta_0}(B_{2R_0}(x_*))}\le C,\quad\text{as}~p\to\iy.
\ee
Once \eqref{claim-1} is proved, we can apply the weak Harnack inequality (\cite[Theorem 8.17]{book-2}) to obtain
	$$\nm{\bar v_p}_{L^\iy(B_{R_0}(x_*))} \le C\sbr{\nm{\bar v_p}_{L^{3/2}(B_{2R_0}(x_*))}
			+ \nm{g_p}_{L^{1+\delta_0}(B_{2R_0}(x_*))}}\le C. $$

Now we need to check the claim \eqref{claim-1}.
Let
	$$\al_p(x)=\f{u_p(x)}{(\int_\Omega u_p^q)^{1/q}}=\frac{\int_\Omega v_p^p}{(\int_\Omega u_p^q)^{1/q}}\bar{u}_p(x).$$
Since $\f{\log x}{x}\le\f{1}{e}$ for any $x\in(0,+\iy)$, we obtain
	$$\log \al_p(x)\le \f{1}{e}\al_p(x),\quad \forall x. $$
Therefore, for any $x\in\Omega$
$$
	\begin{aligned}
		g_p(x)
		&=\f{u_p(x)^q}{\int_\Omega u_p^q}=\exp\sbr{q\log\al_p(x)}\le \exp\sbr{\f{q}{e}\al_p(x)}\\
		&\le \exp\sbr{\frac{q\int_\Omega v_p^p}{e(\int_\Omega u_p^q)^{1/q}}\bar{u}_p(x)}
			\le \exp\sbr{L_p \bar u_p(x)},
	\end{aligned}
$$
where $L_p$ is defined in Corollary \ref{est-6-0}. Since $L=\limsup_{p\to\infty}L_p$ and  $\delta>0$, we have
	$$g_p(x)\le \exp\sbr{(L+\tfrac{\delta}{2}) \bar u_p(x)}\quad\text{for $p$ large}.$$
Since $x_*\not\in\Sigma_\mu(\delta)$, it follows from the definition of $\delta$-regular points that there exists $R_1>0$ such that $B_{2R_1}(x_*)\subset \Omega$ and
	$$\int_{B_{2R_1}(x_*)}f_p<\f{4\pi}{L+\delta}\quad\text{for $p$ large}.$$
Take $\bar u_p=\bar u_{p,1}+\bar u_{p,2}$ with $\bar u_{p,1}=0$ on the boundary $\pa B_{2R_1}(x_*)$
and $\bar u_{p,2}$ is harmonic in $B_{2R_1}(x_*)$.
By the maximum principle, $\bar u_{p,1}>0$ and $\bar u_{p,2}>0$ in $B_{2R_1}(x_*)$.
By Lemma \ref{tem-2}, we have
	$$\int_{B_{2R_1}(x_*)}\exp\sbr{\f{\ga \bar{u}_{p,1}(x)}{\nm{f_p}_{L^1(B_{2R_1}(x_*))}}}\rd x
			\le C_\ga,\quad\text{for any $\ga\in(0,4\pi)$}.$$
Note that $0<\bar u_{p,2}< \bar u_p$ in $B_{2R_1}(x_*)$. Then by the mean value theorem for harmonic functions and \eqref{dd}, we obtain
	$$\nm{\bar u_{p,2}}_{L^\iy(B_{R_1}(x_*))}\le C\nm{\bar u_{p,2}}_{L^1(B_{2R_1}(x_*))}\le C\nm{\bar u_{p}}_{L^1(B_{2R_1}(x_*))}
			\le C\nm{\bar u_{p}}_{L^1(\Omega)}\le C.$$
Take $\delta_0>0$ such that
$\gamma:=4\pi(1+\delta_0)\frac{L+\frac{\delta}{2}}{L+\delta}<4\pi$. Then by the above estimates, we conclude that for $p$ large,
$$
	\begin{aligned}
		\int_{B_{R_1}(x_*)} g_p(x)^{1+\delta_0}\rd x
		&\le \int_{B_{R_1}(x_*)}\exp\sbr{(1+\delta_0)(L+\tfrac{\delta}{2}) \bar u_p(x)} \rd x\\
		&\le C\int_{B_{R_1}(x_*)}\exp\sbr{(1+\delta_0)(L+\tfrac{\delta}{2}) \bar u_{p,1}(x)} \rd x\\
&\le C\int_{B_{2R_1}(x_*)}\exp\sbr{(1+\delta_0)(L+\tfrac{\delta}{2}) \bar u_{p,1}(x)} \rd x\\
		&\le C\int_{B_{2R_1}(x_*)}\exp\sbr{4\pi(1+\delta_0)\f{L+\f{\delta}{2}}{L+\delta}
				\f{\bar{u}_{p,1}(x)}{\nm{f_p}_{L^1(B_{2R_1}(x_*))}}}\rd x\\
		&=C\int_{B_{2R_1}(x_*)}\exp\sbr{
				\f{\gamma\bar{u}_{p,1}(x)}{\nm{f_p}_{L^1(B_{2R_1}(x_*))}}}\rd x\le C_{\gamma}.
	\end{aligned}
$$
Thus by choosing $R_0=R_1/2$, we finish the proof of the claim \eqref{claim-1}.

	Finally, given any $x_*\in\Sigma_\mu(\delta)$, we claim that
for any $R>0$, $\nm{\bar v_p}_{L^\iy(B_R(x_*))}\to+\iy$ as $p\to\iy$.
If not, there exists $R_1>0$ such that up to a subsequence, $\nm{\bar v_p}_{L^\iy(B_{R_1}(x_*))}\le C$ as $p\to\iy$.
Then by Lemma \ref{est-6},
	$$v_p(x)=\bar v_p(x) \int_\Omega u_p^q\le C \int_\Omega u_p^q\le \f{C}{p}\quad\text{for}~x\in B_{R_1}(x_*),$$
and hence
	$$\int_{B_{R_1}(x_*)}f_p=\frac{\int_{B_{R_1}(x_*)}v_p^p}{\int_{\Omega}v_p^p}\le Cp\int_{B_{R_1}(x_*)}\sbr{\f{C}{p}}^p \to 0\quad\text{as $p\to\iy$.}$$
Thus by the definition, $x_*\not\in\Sigma_\mu(\delta)$, a contradiction. This finishes the proof.
\ep

	Actually we have

\bl\lab{equiv-2}
	For any $\delta>0$, $\Sigma_\mu(\delta)=\Sigma_\nu(\delta)$ is a finite nonempty set.
\el
\bp
	Since $x_p\to x_0$ and $\liminf_{p\to\iy}u_p(x_p)\ge 1$, we see from Lemma \ref{est-6} that \be\label{3-19}\bar u_p(x_p)=\frac{u_p(x_p)}{\int_{\Omega}v_p^p}\geq Cpu_p(x_p)\to\iy\quad\text{as }p\to\infty,\ee
so $x_0\in \Sigma_\nu(\delta)$ by Lemma \ref{equiv-1}.
Similarly, we have $y_0\in\Sigma_\mu(\delta)$.
Furthermore,
	$$1=\mu(\Omega)\ge \f{4\pi}{L+2\delta}\#\Sigma_\mu(\delta),\quad
	1=\nu(\Omega)\ge \f{4\pi}{L+2\delta}\#\Sigma_\nu(\delta),$$
which implies $1\le \#\Sigma_\mu(\delta),\#\Sigma_\nu(\delta)<\iy$.

	It remains to prove $\Sigma_\mu(\delta)=\Sigma_\nu(\delta)$.
For a point $x_*\in\Sigma_\mu(\delta)$, take $r_0>0$ small such that $B_{2r_0}(x_*)\cap\Sigma_\mu(\delta)=\{x_*\}$.
Fix any $0<r\leq r_0$. Since $-\Delta \bar v_p=g_p$, it follows from \cite[Theorem 3.7]{book-2} that
\be\label{3-18}\max_{B_r(x_*)} \bar v_p\le C_r(\max_{\pa B_r(x_*)}\bar v_p + \max_{B_r(x_*)} g_p ).\ee
Note from Lemma \ref{equiv-1} that
	$$\max_{B_r(x_*)} \bar v_p\to\iy,\quad\text{as}~p\to\iy.$$
 We claim that
	\begin{equation}\label{3-17}\max_{\pa B_r(x_*)} \bar v_p\le C_r,\quad\text{as}~p\to\iy.\end{equation}
Indeed, if \eqref{3-17} does not hold, then up to a subsequence, there exists $\tilde{x}_p\in\pa B_r(x_*)$ such that $\bar v_p(\tilde{x}_p)\to\iy$ and $\tilde{x}_p\to \tilde x_*\in \pa B_r(x_*)$ as $p\to\iy$. Then it follows from Lemma \ref{equiv-1} that $\tilde x_*\in\Sigma_\mu(\delta)\setminus\{x_*\}$,
which contradicts with $B_{2r_0}(x_*)\cap\Sigma_\mu(\delta)=\{x_*\}$. Thus \eqref{3-17} holds. Clearly \eqref{3-18}-\eqref{3-17} imply \be\label{3-20} \max_{B_r(x_*)} g_p\to\iy\quad\text{ as $p\to\iy$},\ee which, together with $g_p(x)=\frac{u_p(x)^q}{\int_{\Omega}u_p^q}\leq Cp u_p(x)^q$, yields
	$$\liminf_{p\to\iy}\max_{B_r(x_*)} u_p\ge 1.$$
Thus the same argument as \eqref{3-19} implies $\max_{B_r(x_*)} \bar u_p\to\iy$ as $p\to\iy$ for any $0<r\le r_0$,
so it follows from Lemma \ref{equiv-1} that $x_*\in\Sigma_\nu(\delta)$.
This proves $\Sigma_\mu(\delta)\subset\Sigma_\nu(\delta)$.
Similarly we can prove $\Sigma_\nu(\delta)\subset\Sigma_\mu(\delta)$, so $\Sigma_\mu(\delta)=\Sigma_\nu(\delta)$.
\ep

\bl\lab{tem-11}
	For any compact subset $K\subset\overline{\Omega}\setminus\Sigma_\mu(\delta)$, we have
		$$\nm{u_p}_{L^\iy(K)},~\nm{v_p}_{L^\iy(K)}\le \f{C}{p},\quad\text{for $p$ large}. $$
\el
\bp
	Let $\omega=\lbr{x\in\Omega:~dist(x,\pa\Omega)\le \delta_0}$ be the one given in Lemma \ref{boundary}.
Then by \eqref{boundary-1},
	$$\nm{u_p}_{L^\iy(w)},~\nm{v_p}_{L^\iy(w)}\le \f{C}{p},\quad\text{for $p$ large}. $$
Thus it suffices to prove this lemma for those compact subsets $K\Subset\Omega\setminus\Sigma_\mu(\delta)$. Given any $K\Subset\Omega\setminus\Sigma_\mu(\delta)$ and
for any $x\in K$, we have $x\notin \Sigma_\mu(\delta)=\Sigma_\nu(\delta)$, then it follows from the proof of
 Lemma \ref{equiv-1} that there exists $R_x>0$ such that $B_{R_x}(x)\subset\Omega$ and
\[\nm{\bar u_p}_{L^\iy(B_{R_x}(x))},~\nm{\bar v_p}_{L^\iy(B_{R_x}(x))}\le C,\quad\text{for $p$ large}.\]
From here and the finite covering theorem, we obtain
$$\nm{\bar u_p}_{L^\iy(K)},~\nm{\bar v_p}_{L^\iy(K)}\le C,\quad\text{for $p$ large}. $$
This, together with Lemma \ref{est-6} which says that \[u_p(x)=\bar u_p(x)\int_\Omega v_p^p\leq \frac{C}{p}\bar u_p(x),\quad v_p(x)=\bar v_p(x)\int_\Omega u_p^q\leq \frac{C}{p}\bar v_p(x)\quad\text{for $p$ large},\]
implies
$$\nm{u_p}_{L^\iy(K)},~\nm{v_p}_{L^\iy(K)}\le \f{C}{p},\quad\text{for $p$ large}. $$
Thus the proof is complete.
\ep

\vskip0.23in
\section{The blow up analysis}

\subsection{Proof of Theorem \ref{thm1}}
	By Section 3, we can define
		$$\SR:=\Sigma_\mu(\delta)=\Sigma_\nu(\delta)=\{x_1,\cdots,x_k\}.$$
Then $\SR\subset\Omega$ by \eqref{3-233} and Lemma \ref{equiv-1}.
Take $r_0>0$ such that
	$$B_{4r_0}(x_i)\subset\Omega,\quad B_{2r_0}(x_i)\cap B_{2r_0}(x_j)=\emptyset,\quad\text{for}~i,j=1,\cdots,k,~i\neq j.$$
Define $x_{i,p}$ and $y_{i,p}$ by
	$$u_p(x_{i,p})=\max_{B_{2r_0}(x_i)} u_p,\quad v_p(y_{i,p})=\max_{B_{2r_0}(x_i)} v_p.$$
Up to a subsequence if necessary, we assume
\be
	u_p(x_{i,p})\to l_i,\quad v_p(y_{i,p})\to \tilde l_i,\quad\text{as}~p\to\iy.
\ee
Denote
	$$\mu_{i,p}^{-2}=pu_p^{p-1}(x_{i,p}),\quad \tilde\mu_{i,p}^{-2}=pv_p^{p-1}(y_{i,p}).$$
Then we have

\bl\lab{converge3}
	For any $i=1,\cdots,k$ and up to a subsequence of $p\to\iy$, the following hold.
	\begin{itemize}[fullwidth,itemindent=2em]
	\item[(1)]	$\mu_{i,p}\to0$ and $\tilde\mu_{i,p}\to0$.
	\item[(2)]	$x_{i,p}\to x_i$ and $y_{i,p}\to x_i$.
	\item[(3)]	$p(v_p(y_{i,p})-u_p(x_{i,p}))\to C_i$ for some $C_i\ge0$.
	\item[(4)]	$l_i=\tilde l_i\ge1$.
	\item[(5)]	Define
					$$\bcs
						w_{i,p}(x)=\f{p}{u_p(x_{i,p})}(u_p(x_{i,p}+\mu_{i,p}x)-u_p(x_{i,p})),\\
						z_{i,p}(x)=\f{p}{u_p(x_{i,p})}(v_p(x_{i,p}+\mu_{i,p}x)-u_p(x_{i,p})),\ecs
					\quad x\in\Omega_{i,p}:=\f{\Omega-x_{i,p}}{\mu_{i,p}}.$$
				then $(w_{i,p},z_{i,p})\to (U_{i,\q},V_{i,\q})$ in $\CR_{loc}^2(\R^2)$,
				where $(U_{i,\q}+\q\log l_i,V_{i,\q})$ is a solution of \eqref{sys2} and
					$$U_{i,\q}(x)=\log\f{1}{(1+\f{1}{8}l_i^\q|x|^2)^2},\quad
					  V_{i,\q}(x)=\log\f{ l_i^\q}{(1+\f{1}{8} l_i^\q|x|^2)^2},$$
				with $\q$ defined in \eqref{para}.
	\end{itemize}
\el
\bp
\begin{itemize}[fullwidth,itemindent=0em]
\item[(1)]	Follow the proof of \eqref{3-20} in Lemma \ref{equiv-2}, we have
\be\lab{tem-12}
	\lim_{p\to\iy} g_p(x_{i,p})=\lim_{p\to\iy}\max_{B_{2r_0}(x_i)}g_p=+\iy,\quad \lim_{p\to\iy} f_p(y_{i,p})=+\iy.
\ee
Since
\[g_p(x_{i,p})=\frac{u_p(x_{i,p})^{q}}{\int_{\Omega}u_p^q}\leq Cpu_p(x_{i,p})^{q}\leq Cpu_p(x_{i,p})^{p-1}=C\mu_{i,p}^{-2},\]
we obtain $\mu_{i,p}\to0$ and similarly $\tilde\mu_{i,p}\to0$.

\item[(2)]	Assume by contradiction that up to a subsequence, $x_{i,p}\to \tilde x_i$ with $\tilde x_i\in \overline{ B_{2r}(x_i)}\setminus\{x_i\}$, then $\tilde x_i\not\in\SR$.
Note from \eqref{tem-12} that
	\be \label{3-22}\liminf_{p\to\iy}u_p(x_{i,p})\ge 1, \quad \liminf_{p\to\iy}v_p(y_{i,p})\ge 1.\ee
Thus the same argument as \eqref{3-19} implies $\bar u_p(x_{i,p})\to\iy$, so
it follows from Lemma \ref{equiv-1} that $\tilde x_i\in\SR$, a contradiction.
So $x_{i,p}\to  x_i$. Similarly we can prove $y_{i,p}\to x_i$.

\item[(3)\&(5)] For any $R>0$ and $x\in B_R(0)$, we have
	$$x_{i,p}+\mu_{i,p}x\in B_{\mu_{i,p}R}(x_{i,p})\subset B_{2r_0}(x_i)\quad\text{for $p$ large.}$$
Then we have
 	$$w_{i,p}(0)=0=\max_{B_R(0)} w_{i,p}\quad\text{and}\quad\max_{B_R(0)} z_{i,p}\leq \frac{p(v_p(y_{i,p})-u_p(x_{i,p}))}{u_p(x_{i,p})}.$$
Using this fact, we can prove $(3)$ and $(5)$ exactly as Lemmas \ref{converge1} and \ref{compare1}.

\item[(4)]	This follows from \eqref{3-22} and the statment $(3)$.
\end{itemize}
The proof is complete.
\ep

\br\lab{converge4}
	Like Remark \ref{converge2}, we can scale $u_p,v_p$ at $y_{i,p}$.
Denote the scaling of $u_p,v_p$ at $y_{i,p}$ by
\be
	\begin{aligned}
	\tilde w_{i,p}(x)&=\f{p}{v_p(y_{i,p})}\sbr{u_p(y_{i,p}+\tilde \mu_{i,p}x)-v_p(y_{i,p})},\\
	\tilde z_{i,p}(x)&=\f{p}{v_p(y_{i,p})}\sbr{v_p(y_{i,p}+\tilde \mu_{i,p}x)-v_p(y_{i,p})},
	\end{aligned}\quad\quad x\in\tilde\Omega_{i,p}:=\f{\Omega-y_{i,p}}{\tilde\mu_{i,p}}.
\ee
We see that $\tilde\Omega_{i,p}\to\R^2$ as $p\to\iy$.
It is easy to check that
	$$1+\f{\tilde w_{i,p}(x)}{p}=\f{u_p(y_{i,p}+\tilde\mu_{i,p}x)}{v_p(y_{i,p})},
		\quad 1+\f{\tilde z_{i,p}(x)}{p}=\f{v_p(y_{i,p}+\tilde\mu_{i,p}x)}{v_p(y_{i,p})},$$
and
\be
	\bcs
		-\Delta\tilde w_{i,p}=(1+\f{\tilde z_{i,p}}{p})^p, \quad\text{in}~\tilde\Omega_{i,p},\\
		-\Delta\tilde z_{i,p}=v_p(y_{i,p})^{\q_p}(1+\f{\tilde w_{i,p}}{p})^q, \quad\text{in}~\tilde\Omega_{i,p},\\
		\tilde w_{i,p}=\tilde z_{i,p}=-p,\quad\text{on}~\pa\tilde\Omega_{i,p}.
	\ecs
\ee
Moreover, $\tilde z_{i,p}(0)=0=\max_{B_R(0)}\tilde z_{i,p}$ for any $R>0$.
Following the proof of Lemmas \ref{converge1} and \ref{compare1}, we have that
		$$\tilde w_{i,p}\to \tilde U_{i,\q},\quad \tilde z_{i,p}\to \tilde V_{i,\q},
			\quad\text{in}~\CR_{loc}^2(\R^2),\quad\text{as}~p\to\iy,$$
where $(\tilde U_{i,\q}+\q\log\tilde l_i,\tilde V_{i,\q})$ is a solution of \eqref{sys2} and
	\be
		\tilde U_{i,\q}(x)=\log\f{\tilde l_i^{-\q}}{(1+\f{1}{8}|x|^2)^2},\quad
		\tilde V_{i,\q}(x)=\log\f{1}{(1+\f{1}{8}|x|^2)^2}.
	\ee
\er

By Lemma \ref{tem-11}, we see that
	\be\label{3-23} u_p=O(\tfrac{1}{p}), \quad v_p=O(\tfrac{1}{p}),\qquad\text{in}~\CR_{loc}(\overline{\Omega}\setminus\SR).\ee
More precisely, we have
\bl\lab{converge-2}
	Up to a subsequence, we have
		$$pu_p(x)\to 8\pi \sum_{i=1}^k l_iG(x,x_i),\quad pv_p(x)\to 8\pi \sum_{i=1}^k l_iG(x,x_i),
			\quad\text{in}~\CR^2_{loc}(\overline{\Omega}\setminus\SR),\quad\text{as}~p\to\iy.$$
\el
\bp Given any compact subset $K\Subset \overline{\Omega}\setminus \SR$,
	there is small $r>0$ such that $K\subset \overline{\Omega}\setminus \sum_{i=1}^k B_r(x_i)$. Then for any $x\in K$ and $d\in(0,r)$, it follows from the Green representation formula and \eqref{3-23} that
$$
	\begin{aligned}
		pu_p(x)
		&=p\int_\Omega G(x,y)v_p^p(y) \rd y\\
		&=\sum_{i=1}^k p\int_{B_d(x_i)} G(x,y)v_p^p(y) \rd y+o_p(1)\int_{\Omega\setminus \sum_{i=1}^k B_r(x_i)} G(x,y)\rd y\\
		&\to\sum_{i=1}^k \ga_i G(x,x_i),\quad\text{uniformly for $x\in K$ as}~p\to\iy,
	\end{aligned}
$$
where
	$$\ga_i=\lim_{d\to0}\lim_{p\to\iy}p\int_{B_d(x_i)}v_p^p(y)\rd y.$$
Again by the Green representation formula,
	$$p\nabla u_p(x)=p\int_\Omega \nabla_x G(x,y)v_p^p(y) \rd y\to \sum_{i=1}^k \ga_i\nabla_x G(x,x_i),$$
so $pu_p(x)\to \sum_{i=1}^k \ga_i G(x,x_i)$ in $\CR_{loc}^1(\overline{\Omega}\setminus \SR)$.  Similarly, we can obtain $pv_p(x)\to \sum_{i=1}^k \tilde{\ga}_i G(x,x_i)$ in $\CR_{loc}^1(\overline{\Omega}\setminus \SR)$, where
$$\tilde \ga_i=\lim_{d\to0}\lim_{p\to\iy}p\int_{B_d(x_i)}u_p^q(y)\rd y.$$
From here and $-\Delta (pu_p)=pv_p^p\to 0$, $-\Delta (pv_p)=pu_p^q\to 0$ in $\Omega\setminus \SR$, it follows from the standard elliptic estimates that
$$pu_p(x)\to \sum_{i=1}^k \ga_i G(x,x_i), \quad pv_p(x)\to \sum_{i=1}^k \tilde\ga_i G(x,x_i),\quad\text{in}~\CR_{loc}^2(\overline{\Omega}\setminus \SR),\quad\text{as}~p\to\iy.$$

It remains to prove $\gamma_i=\tilde\gamma_i=8\pi l_i$.	For every $i=1,\cdots,k$, since
$$
	\begin{aligned}
	\lim_{p\to\iy}p\int_{B_d(x_i)}v_p^p
	&\ge \lim_{p\to\iy}p\int_{B_{d/2}(x_{i,p})}v_p^p 	
			=\lim_{p\to\iy} u_p(x_{i,p})\int_{B_{\f{d}{2\mu_{i,p}}}(0)}\Big(1+\f{z_{i,p}}{p}\Big)^p\\
	&\ge l_i\int_{\R^2}e^{V_{i,\q}}=8\pi l_i,
	\end{aligned}
$$
and
$$
	\begin{aligned}
	\lim_{p\to\iy}p\int_{B_d(x_i)}u_p^q
	&\ge \lim_{p\to\iy}p\int_{B_{d/2}(x_{i,p})}u_p^q	
			=\lim_{p\to\iy} u_p(x_{i,p})^{\q_p+1}\int_{B_{\f{d}{2\mu_{i,p}}}(0)}\Big(1+\f{w_{i,p}}{p}\Big)^q\\
	&\ge l_i^{\q+1}\int_{\R^2}e^{U_{i,\q}}=8\pi l_i,
	\end{aligned}
$$
we get $\ga_i,\tilde\ga_i\ge 8\pi l_i$.

Noting that for $h=1,\cdots,k$ and $x\in B_r(x_h)\setminus\{x_h\}$, we have
\be\lab{tem-3}
	p\nabla u_p(x)\to \sum_{i=1}^k \ga_i\nabla_x G(x,x_i)=-\f{\ga_h}{2\pi}\f{x-x_h}{|x-x_h|^2}+O(1),
\ee
\be\lab{tem-4}
	p\nabla v_p(x)\to \sum_{i=1}^k \tilde\ga_i\nabla_x G(x,x_i)=-\f{\tilde\ga_h}{2\pi}\f{x-x_h}{|x-x_h|^2}+O(1).
\ee
Applying the Pohozaev identity \eqref{pho1} with $y=x_h$, $\Omega'=B_d(x_h)$, $(u,v)=(u_p,v_p)$, we obtain
\be\lab{tem-5}
	\begin{aligned}
			&\quad~~ \f{2p^2}{p+1}\int_{B_d(x_h)}v_p^{p+1}\rd x+\f{2p^2}{q+1}\int_{B_d(x_h)}u_p^{q+1}\rd x\\
			&=-d\int_{\pa B_d(x_h)}\abr{p\nabla u_p,p\nabla v_p}\rd \sigma_x
				+2d\int_{\pa B_d(x_h)}\abr{p\nabla u_p,\nu}\abr{p\nabla v_p,\nu}\rd\sigma_x\\
			&\quad	+d\int_{\pa B_d(x_h)} \f{p^2v_p^{p+1}}{p+1}+\f{p^2u_p^{q+1}}{q+1}\rd\sigma_x.
	\end{aligned}
\ee
Using \eqref{tem-3}-\eqref{tem-4} and \eqref{3-23}, we obtain
	$$\text{Right hand side of \eqref{tem-5}}=\f{\ga_h\tilde\ga_h}{2\pi}+O(d)+o_p(1).$$
Meanwhile, since Lemma \ref{converge3} shows $\max_{B_d(x_h)}v_p\to l_h$ and $\max_{B_d(x_h)}u_p\to l_h$ as $p\to \infty$, we have
	$$\begin{aligned}
		\text{Left hand side of \eqref{tem-5}}
		&\le 2l_h\lim_{p\to\iy}\sbr{p\int_{B_d(x_h)}(u_p^q+v_p^p)} +o_p(1)\\
		&=2l_h(\ga_h+\tilde\ga_h)+o_d(1)+o_p(1).
	\end{aligned}$$
So letting $p\to\iy$ and $d\to0$ in \eqref{tem-5}, we get $\f{\ga_h\tilde\ga_h}{2\pi}\leq 2l_h(\ga_h+\tilde\ga_h)$ or equivalently,
	$$\f{1}{\ga_h}+\f{1}{\tilde\ga_h}\ge\f{1}{4\pi l_h}.$$
From here and $\ga_h,\tilde\ga_h\ge 8\pi l_h$, we conclude that $\ga_h=\tilde\ga_h=8\pi l_h$.
\ep

	To proceed, we give a decay estimate of $w_{i,p},z_{i,p}$, which will be used to apply the Dominated Convergence Theorem.
\bl\lab{decay}
	For any $\ga\in(0,4)$, there exist small $r_\ga>0$, large $R_\ga>0$, $p_\ga>1$ and $C_\ga>0$ such that
	\be
		w_{i,p}(x),~z_{i,p}(x)\le \ga\log\f{1}{|x|}+C_\ga,
	\ee
	for any $2R_\ga\le |x|\le \f{r_\ga}{\mu_{i,p}}$, $p\ge p_\ga$ and $i=1,\cdots,k$.
\el
\bp
	First we prove it for $w_{i,p}$.
For simiplicity, we omit the parameter $i$ and denote
	$$w_p=w_{i,p}, ~z_p=z_{i,p},~~\mu_p=\mu_{i,p}, ~x_p=x_{i,p}, ~\Omega_p=\Omega_{i,p}.$$
By the Green Representation formula we have for any $x\in\Omega_p$,
$$
	\begin{aligned}
		u_p(x_p+\mu_px)
		&=\int_\Omega G(x_p+\mu_px,y)v_p^p(y)\rd y\\
		&=\f{u_p(x_p)}{p}\int_{\Omega_p} G(x_p+\mu_px,x_p+\mu_pz)\sbr{1+\f{z_p(z)}{p}}^p\rd z.
	\end{aligned}
$$
Then
	$$w_p(x)=-p+\int_{\Omega_p} G(x_p+\mu_px,x_p+\mu_pz)\sbr{1+\f{z_p(z)}{p}}^p\rd z.$$
Since $w_p(0)=0$ and $G(x,y)=-\f{1}{2\pi}\log|x-y|-H(x,y)$, we have
$$
	\begin{aligned}
		w_p(x)
		&=w_p(x)-w_p(0)\\
		&=\int_{\Omega_p} \mbr{G(x_p+\mu_px,x_p+\mu_pz)-G(x_p,x_p+\mu_pz)}\sbr{1+\f{z_p(z)}{p}}^p\rd z\\
		&=\f{1}{2\pi}\int_{\Omega_p} \log\f{|z|}{|z-x|}\sbr{1+\f{z_p(z)}{p}}^p\rd z\\
			&\quad -\int_{\Omega_p} \mbr{H(x_p+\mu_px,x_p+\mu_pz)-H(x_p,x_p+\mu_pz)}\sbr{1+\f{z_p(z)}{p}}^p\rd z\\
		&=:\Rmnum{1}+\Rmnum{2}.
	\end{aligned}
$$
Since $H$ is smooth, we have $\Rmnum{2}=O(1)$ if $|\mu_px|\le r_\ga$ with $r_\ga$ small to be chosen later.

Let $\e=\f{2\pi}{3}(4-\ga)>0$ and take $R_\ga>0$ such that $\int_{B_{R_\ga}(0)}e^{V_{i,\q}}>\int_{\R^2}e^{V_{i,\q}}-\f{\e}{2}=8\pi-\f{\e}{2}$, where $V_{i,\q}$ is given in Lemma \ref{converge3}.
Then for $p$ large enough,
	\be\label{3-24}\int_{B_{R_\ga}(0)}\sbr{1+\f{z_p}{p}}^p\ge \int_{B_{R_\ga}(0)}e^{V_{i,\q}}-\f{\e}{2}> 8\pi-\e.\ee
Next, we divide the integral $\Rmnum{1}$ in the following way
$$
	\begin{aligned}
		\Rmnum{1}
		&=\f{1}{2\pi}\int_{\lbr{|z|\le R_\ga}} \log\f{|z|}{|z-x|}\sbr{1+\f{z_p}{p}}^p\rd z\\
		&\quad	+\f{1}{2\pi}\int_{\lbr{|z|\ge \f{2r_\ga}{\mu_p}} }\log\f{|z|}{|z-x|}\sbr{1+\f{z_p}{p}}^p\rd z\\
		&\quad	+\f{1}{2\pi}\int_{\lbr{R_\ga\le |z|\le \f{2r_\ga}{\mu_p},~|z|\le 2|z-x|}} \log\f{|z|}{|z-x|}\sbr{1+\f{z_p}{p}}^p\rd z\\
		&\quad	+\f{1}{2\pi}\int_{\lbr{R_\ga\le |z|\le \f{2r_\ga}{\mu_p},~|z|\ge 2|z-x|}} \log|z|\sbr{1+\f{z_p}{p}}^p\rd z\\
		&\quad	+\f{1}{2\pi}\int_{\lbr{R_\ga\le |z|\le \f{2r_\ga}{\mu_p},~|z|\ge 2|z-x|}} \log\f{1}{|z-x|}\sbr{1+\f{z_p}{p}}^p\rd z\\
		&=:\Rmnum{1}_1+\Rmnum{1}_2+\Rmnum{1}_3+\Rmnum{1}_4+\Rmnum{1}_5.
	\end{aligned}
$$
Let $2R_\ga\le |x|\le \f{r_\ga}{\mu_p}$.
We observe that if $|z|\le R_\ga$, then $|x|\ge 2|z|$, $|x-z|\geq \frac12 |x|$ and hence
	$$\log\f{|z|}{|z-x|}\le \log\f{2R_\ga}{|x|}\le 0.$$
From here and \eqref{3-24}, we have
	$$\Rmnum{1}_1\le \f{1}{2\pi}\log\f{2R_\ga}{|x|}\int_{\lbr{|z|\le R_\ga}} \sbr{1+\f{z_p}{p}}^p
	\le C+(4-\f{\e}{2\pi})\log\f{1}{|x|}.$$
If $|z|\ge \f{2r_\ga}{\mu_p}$, then $|z|\ge2|x|$ and hence
	$$\log\f{2}{3}\le\log\f{|z|}{|z-x|}\le\log2,$$
which implies
	$$\Rmnum{1}_2\le C\int_{\Omega_p}\sbr{1+\f{z_p}{p}}^p= \frac{Cp}{u_p(x_p)}\int_{\Omega}v_p^p\le C.$$
Similarly,
	$$\Rmnum{1}_3\le C\int_{\Omega_p}\sbr{1+\f{z_p}{p}}^p\le C.$$
From the proof of Lemma \ref{converge-2} and $u_p(x_p)\to l_i$, we see that
	$$\lim_{r\to0}\lim_{p\to\iy}\int_{\lbr{|z|\le\f{2r}{\mu_p}}}\sbr{1+\f{z_p}{p}}^p
		=\lim_{r\to0}\lim_{p\to\iy}\f{p}{u_p(x_p)}\int_{B_{2r}(x_p)}v_p^p=8\pi.$$
Thus we can choose $r_\ga>0$ small such that for $p$ large,
	$$\int_{\lbr{|z|\le\f{2r_\ga}{\mu_p}}}\sbr{1+\f{z_p}{p}}^p\le 8\pi+\e$$
and then \eqref{3-24} yields
$$\int_{\lbr{R_\ga\leq |z|\le\f{2r_\ga}{\mu_p}}}\sbr{1+\f{z_p}{p}}^p\le 2\e.$$
We also observe that $|z|\ge2|z-x|$ implies $|z|\le2|x|$,
so
	$$\Rmnum{1}_4\le \f{1}{2\pi}\log2|x|\int_{\lbr{R_\ga\le|z|\le\f{2r_\ga}{\mu_p}}}\sbr{1+\f{z_p}{p}}^p
		\le C-\f{\e}{\pi}\log\f{1}{|x|}.$$
Finally, recall Lemma \ref{converge3} that
\[\max z_p\leq \frac{p(v_p(y_p)-u_p(x_p))}{u_p(x_p)}\to \frac{C_i}{l_i},\quad\text{as $p\to \infty$},\]
which implies $\max z_p\leq C$ for $p$ large.
Since $|z-x|\ge1$ implies $\log\f{1}{|z-x|}\le0$, we have
	\begin{align*}\Rmnum{1}_5&\le \f{1}{2\pi}\int_{\lbr{R_\ga\le |z|\le \f{2r_\ga}{\mu_p},~|z|\ge 2|z-x|, |z-x|\leq 1}} \log\f{1}{|z-x|}\sbr{1+\f{z_p}{p}}^p\rd z\\
&\le C\sbr{1+\f{C}{p}}^p\int_{\lbr{|z-x|\le 1}}\log\f{1}{|z-x|}\rd z\le C.\end{align*}
As a result, we get
	$$\Rmnum{1}=\sum_{j=1}^5 \Rmnum{1}_j\le C_\ga+(4-\f{3\e}{2\pi})\log\f{1}{|x|}=\ga\log\f{1}{|x|}+C_\ga.$$
This proves $w_p(x)\le \ga\log\f{1}{|x|}+C_\ga$ for $2R_\ga\le |x|\le \f{r_\ga}{\mu_p}$ and $p$ large.

	Using the following facts $z_{i,p}(0)\to V_{i,\q}(0)$,
	$$\lim_{R\to+\iy}\lim_{p\to\iy}u_p^{\q_p}(x_{i,p})\int_{B_R(0)}\sbr{1+\f{w_{i,p}}{p}}^q
		\ge l_i^\q\int_{\R^2}e^{U_{i,\q}}=8\pi,$$
and
	$$\lim_{r\to0}\lim_{p\to\iy}u_p^{\q_p}(x_{i,p})\int_{\lbr{|z|\le\f{2r}{\mu_{i,p}}}}\sbr{1+\f{w_{i,p}}{p}}^q
		=\lim_{r\to0}\lim_{p\to\iy}\f{p}{u_p(x_{i,p})}\int_{B_{2r}(x_{i,p})}u_p^q=8\pi,$$
we can prove a similar result for $z_{i,p}$.
\ep

\br\lab{decay-1}
	For any $0<\ga<4$ and for $p$ large, if $2R_\ga\le |x|\le \f{r_\ga}{\mu_{i,p}}$, then Lemma \ref{decay} implies
		$$\sbr{1+\f{w_{i,p}(x)}{p}}^q=e^{q\log(1+\f{w_{i,p}(x)}{p})}\le e^{\f{q}{p}w_{i,p}(x)}
			\le \f{C_\ga}{|x|^\ga}.$$
	Meanwhile, by the convergence of $w_{i,p}$ in $\CR_{loc}^2(\R^2)$, we have $\sbr{1+\f{w_{i,p}(x)}{p}}^q\le C$ for $|x|\le 2R_\ga$ and $p$ large.
	Therefore, for any $|x|\leq \f{r_\ga}{\mu_{i,p}}$ and $p$ large, we have
		$$0\le \sbr{1+\f{w_{i,p}(x)}{p}}^q\le \f{C_\ga}{1+|x|^\ga},$$
	and similarly,
		$$0\le\sbr{1+\f{z_{i,p}(x)}{p}}^p\le \f{C_\ga}{1+|x|^\ga}.$$
\er

	As a direct application of the above decay estimates, we prove

\bl\lab{tem-13}
	It holds
		$$l_i=\sqrt e,\quad\text{for any}~i=1,\cdots,k.$$
\el
\bp
By Remark \ref{decay-1}, there is $r>0$ and $p_0>0$ such that for any $|x|\le \f{r}{\mu_{i,p}}$ and $p\geq p_0$, we have
	\be\label{33-25} 0\le\sbr{1+\f{z_{i,p}(x)}{p}}^p, \;\sbr{1+\f{w_{i,p}(x)}{p}}^q\le \f{C}{1+|x|^3}.\ee
Since $|G(x_{i,p},y)|\leq C|\log |x_{i,p}-y||\leq C$ for any $y\in \Omega\setminus B_r(x_{i,p})$, we have
$$\left|\int_{\Omega\setminus B_r(x_{i,p})} G(x_{i,p},y)v_p^p(y)\rd y\right|
\leq C\int_{\Omega}v_p^p(y)\rd y\leq \frac{C}{p}.$$
Then by the Green representation formula, we have
$$\begin{aligned}
	u_p(x_{i,p})
	&=\int_\Omega G(x_{i,p},y)v_p^p(y)\rd y\\
	&=\int_{B_r(x_{i,p})}G(x_{i,p},y)v_p^p(y)\rd y+o_p(1)\\ &=\f{u_p(x_{i,p})}{p}\int_{B_{\f{r}{\mu_{i,p}}}(0)}G(x_{i,p},x_{i,p}+\mu_{i,p}z)\sbr{1+\f{z_{i,p}(z)}{p}}^p\rd z+o_p(1),
\end{aligned}$$
so
\[\lim_{p\to\infty}\f{1}{p}\int_{B_{\f{r}{\mu_{i,p}}}(0)}G(x_{i,p},x_{i,p}+\mu_{i,p}z)\sbr{1+\f{z_{i,p}(z)}{p}}^p\rd z=1.\]
On the other hand,
by \eqref{33-25}, Lemma \ref{converge3} and the Dominated Convergence Theorem, we get
\[\lim_{p\to\infty}\int_{B_{\f{r}{\mu_{i,p}}}(0)} \sbr{1+\f{z_{i,p}(z)}{p}}^p\rd z
=\int_{\R^2}e^{V_{i,\theta}}=8\pi,\]
\begin{align*}&\lim_{p\to\infty}\int_{B_{\f{r}{\mu_{i,p}}}(0)} \sbr{\f{1}{2\pi}\log|z|
			+H(x_{i,p},x_{i,p}+\mu_{i,p}z)}\sbr{1+\f{z_{i,p}(z)}{p}}^p\rd z\\
&\qquad=\int_{\R^2} \sbr{\f{1}{2\pi}\log|z|
			+H(x_{i},x_{i})}e^{V_{i,\theta}(z)}\rd z=C <\infty.\end{align*}
From here, $G(x,y)=-\f{1}{2\pi}\log|x-y|-H(x,y)$ and $\mu_{i,p}^{-2}=pu_p(x_{i,p})^{p-1}$, we have
$$\begin{aligned}
	&\quad \f{1}{p}\int_{B_{\f{r}{\mu_{i,p}}}(0)}  G(x_{i,p},x_{i,p}+\mu_{i,p}z)\sbr{1+\f{z_{i,p}(z)}{p}}^p\rd z\\
	&=-\f{1}{2\pi}\f{\log\mu_{i,p}}{p}\int_{B_{\f{r}{\mu_{i,p}}}(0)} \sbr{1+\f{z_{i,p}(z)}{p}}^p\rd z\\
	&\quad -\f{1}{p}\int_{B_{\f{r}{\mu_{i,p}}}(0)} \sbr{\f{1}{2\pi}\log|z|
			+H(x_{i,p},x_{i,p}+\mu_{i,p}z)}\sbr{1+\f{z_{i,p}(z)}{p}}^p\rd z\\
	&=\f{1}{4\pi}\sbr{\f{\log p}{p}+\f{p-1}{p}\log u_p(x_{i,p})}(8\pi+o_p(1))+o_p(1)\\
	&=2\log l_i+o_p(1).
\end{aligned}$$
Thus $2\log l_i=1$, i.e., $l_i=\sqrt e$.
\ep

\bl
	It holds
		$$p\int_\Omega|\nabla u_p|^2\to k8\pi e,\quad p\int_\Omega|\nabla v_p|^2\to k8\pi e,
			\quad p\int_\Omega\nabla u_p\cdot\nabla v_p\to k8\pi e,$$
	as $p\to\iy$.
\el
\bp
	By \eqref{3-23}, \eqref{33-25} and the Dominated Convergence Theorem, we have
$$
	\begin{aligned}
		p\int_\Omega|\nabla u_p|^2
		&=p\int_\Omega v_p^pu_p=\sum_{i=1}^k p\int_{B_r(x_{i,p})}v_p^pu_p+o_p(1)\\
		&=\sum_{i=1}^k u_p^2(x_{i,p})\int_{B_{\f{r}{\mu_{i,p}}}(0)}\sbr{1+\f{z_{i,p}}{p}}^p\sbr{1+\f{w_{i,p}}{p}}+o_p(1)\\
		&\to \sum_{i=1}^k l_i^2\int_{\R^2}e^{V_{i,\q}}=k8\pi e,\quad\text{as}~p\to\iy.
	\end{aligned}
$$
The other assertions can be proved similarly.
\ep

\bl
For any $i=1,\cdots,k$, we have
		$$\f{x_{i,p}-y_{i,p}}{\mu_{i,p}}\to0,\quad\text{as}~p\to\iy.$$
\el
\bp
	Suppose that up to a subsequence, $\f{|x_{i,p}-y_{i,p}|}{\mu_{i,p}}\to +\iy$ as $p\to\iy$.
Let $\tilde \mu_{i,p}$, $\tilde z_{i,p}$ be defined as in Remark \ref{converge4}.
Then it follows from Lemma \ref{converge3} that
	$$\f{\mu_{i,p}^2}{\tilde \mu_{i,p}^2}=\sbr{\f{v_p(y_{i,p})}{u_p(x_{i,p})}}^{p-1}=\Big(1+\f{C_i/l_i+o_p(1)}{p}\Big)^{p-1}\to e^{C_i/l_i}\in[1,+\iy).$$
Thus for any $R>0$, we have $B_{\mu_{i,p}R}(x_{i,p})\cap B_{\tilde \mu_{i,p}R}(y_{i,p})=\emptyset$ for $p$ large, therefore,
$$
	\begin{aligned}
		&\quad k8\pi e+o_p(1)=p\int_\Omega\nabla u_p\cdot\nabla v_p= p\int_\Omega v_p^{p+1}\\
		&\ge \sum_{j=1,j\neq i}^k p\int_{B_{\mu_{j,p}R}(x_{j,p})} v_p^{p+1}
			+p\int_{B_{\mu_{i,p}R}(x_{i,p})} v_p^{p+1}+p\int_{B_{\tilde\mu_{i,p}R}(y_{i,p})} v_p^{p+1}\\
		&= \sum_{j=1}^k u_p^2(x_{j,p})\int_{B_R(0)} \sbr{1+\f{z_{j,p}}{p}}^{p+1}
			+v_p^2(y_{i,p})\int_{B_R(0)}\sbr{1+\f{\tilde z_{i,p}}{p}}^{p+1}\\
		&\to e\sum_{j=1}^k \int_{B_R(0)} e^{V_{j,\q}}+e\int_{B_R(0)} e^{\tilde V_{i,\q}},\quad\text{as}~p\to\iy.
	\end{aligned}
$$
Since $\int_{\R^2} e^{V_{j,\q}}=\int_{\R^2}e^{\tilde V_{i,\q}}=8\pi$, taking $R>0$ large enough
we obtain $k8\pi e\ge (k+\f{1}{2})8\pi e$, a contradiction.
Thus $\f{y_{i,p}-x_{i,p}}{\mu_{i,p}}$ are uniformly bounded and up to a subsequence, $\f{y_{i,p}-x_{i,p}}{\mu_{i,p}}\to a\in\R^2$ as $p\to\iy$.
Since $z_{i,p}(\f{y_{i,p}-x_{i,p}}{\mu_{i,p}})=\max z_{i,p}$, we have $V_{i,\q}(a)=\max V_{i,\q}=V_{i,\q}(0)$ and so $a=0$.
\ep

	Finally we compute the location of the blow up points $x_1,\cdots,x_k$.
\bl
	It holds
		$$\nabla R(x_i)-2\sum_{j=1,j\neq i}^k\nabla G(x_i,x_j)=0,$$
	for every $i=1,\cdots,k$.
\el
\bp
	We only prove the conclusion for $i=1$.
Applying the Pohozaev identity \eqref{pho2} with $\Omega'=B_r(x_1)$ and $(u,v)=(u_p,v_p)$, we obtain from \eqref{3-23} that
\be\lab{tem-6}
	\begin{aligned}
			&-\int_{\pa B_r(x_1)}\abr{p\nabla u_p,\nu}p\pa_iv_p+\abr{p\nabla v_p,\nu}p\pa_iu_p\rd\sigma_x
			+\int_{\pa B_r(x_1)}\abr{p\nabla u_p,p\nabla v_p}\nu_i\rd \sigma_x\\
			&=\int_{\pa B_r(x_1)}\sbr{\f{p^2v_p^{p+1}}{p+1}+\f{p^2u_p^{q+1}}{q+1}}\nu_i\rd\sigma_x=o_p(1),\quad i=1,2.
	\end{aligned}
\ee	
Set
	$$h(x)=\sum_{i=1}^k G(x,x_i),$$
then it follows from Lemmas \ref{converge-2} and \ref{tem-13} that
	$$p\nabla u_p(x)\to 8\pi\sqrt e\nabla h(x),
		\quad p\nabla v_p(x)\to 8\pi\sqrt e\nabla h(x),$$
uniformly for $x\in \pa B_r(x_1)$ as $p\to\iy$.
Thus we deduce from \eqref{tem-6} that
\be\lab{tem-7}
	\int_{\pa B_r(x_1)}\abr{\nabla h,\nu}\pa_i h-\f{1}{2}|\nabla h|^2\nu_i\rd\sigma=0,\quad i=1,2.
\ee
Denote $\bar h(x)=H(x,x_1)-\sum_{j=2}^k G(x,x_j)$, we have $\bar h\in\CR^\iy (B_{r}(x_1))$ and $h(x)=-\frac1{2\pi}\log|x-x_1|-\bar h(x)$.
For any $x\in\pa B_r(x_1)$, by direct computations we get
	$$\abr{\nabla h(x),\nu}\pa_i h(x)-\f{1}{2}|\nabla h(x)|^2\nu_i
		=\f{(x-x_1)_i}{8\pi^2r^3}+\f{1}{2\pi r}\pa_i \bar h(x)+O(1).$$
Inserting this into \eqref{tem-7}, we see that $\pa_i \bar h(\xi_{r,i})=O(r)$ for some $\xi_{r,i} \to x_1$ as $r\to0$.
So letting $r\to0$, we have $\nabla \bar h (x_1)=0$. This, together with $R(x)=H(x,x)$, implies
	$$\nabla R(x_1)-2\sum_{j=2}^k\nabla G(x_1,x_j)=0.$$
This completes the proof.
\ep

\bp[Proof of Theorem \ref{thm1}]
	The proof is a combination of the above results.
\ep

\subsection{The least energy solution}\lab{leastenergy}
	
	To introduce the least energy solution in \cite{exist3}, we need some notations.
Consider the energy functional
	$$I_p(u,v)=\int_\Omega\nabla u\cdot\nabla v-\int_\Omega\f{|v|^{p+1}}{p+1}+\f{|u|^{q+1}}{q+1},\quad\text{for}~u,v\in H_0^1(\Omega),$$
and
	$$J_p(u)=\f{p}{p+1}\int_\Omega |\Delta u|^{1+\f{1}{p}}-\f{1}{q+1}\int_\Omega |u|^{q+1}$$
defined on $E=W_0^{1,1+\f{1}{p}}(\Omega)\cap W^{2,1+\f{1}{p}}(\Omega)$, endowed with  the norm
	$$\nm{u}_E^{1+\f{1}{p}}=\int_\Omega |\Delta u|^{1+\f{1}{p}}.$$
Then $I_p\in\CR^1(H_0^1(\Omega)\times H_0^1(\Omega))$ and $J_p\in\CR^1(E)$.
Clearly $(u,v)$ is a solution of \eqref{sys1} if and only if $(u,v)$ is a critical point of $I_p$,
if and only if $u$ is a critical point of $J_p$ and $v=(-\Delta u)^{\f{1}{p}}$.
A solution $(u_p,v_p)$ of \eqref{sys1} is called {\it a least energy solution}  if $u_p$ is a critical point of $J_p$
with the least energy among all nontrivial critical points of $J_p$.
It was proved in \cite{exist3} that \eqref{sys1} has a least energy solution $(u_p,v_p)$
such that $v_p=(-\Delta u_p)^{\f{1}{p}}$ and  $J_p(u_p)=\inf_{u\in\NR_p} J_p(u)$ with
	$$\NR_p:=\lbr{u\in E\setminus\{0\}:~J_p'(u)[u]=\int_\Omega |\Delta u|^{1+\f{1}{p}}-\int_\Omega u^{q+1}=0 }.$$
Here we prove that this least energy solution satisfy \eqref{con1} with $\beta=8\pi e$.
\bl\lab{tem-14}
	Let $p,q$ satisfy \eqref{para}. Then for a sequence of least energy solutions $(u_p,v_p)$ defined as above, it holds
	\be\label{4-1}\limsup_{p\to+\infty}p\int_\Omega\nabla u_p\cdot\nabla v_p\le 8\pi e.\ee
\el
\bp
	We construct a proper test function. Without loss of generality, we assume $0\in\Omega$.
Let
	$$\vp_p(x)=\sqrt e\sbr{ 1+\f{z(y)}{p} }, \quad\text{with}~y=\f{x}{\e_p},$$
where $z(y)=-2\log(1+\f{1}{8}e^{\f{\q}{2}}|y|^2)$ and $\e_p^{-2}=p e^{\f{p-1}{2}}$.
Take $r>0$ small such that $B_{2r}(0)\subset\Omega$.
Take a cut-off function $\eta\in\CR^\iy_0(B_{2r}(0))$ such that
	$$0\le \eta\le 1,~~ \eta(x)=1~\text{for}~|x|\le r,$$
and
	$$|\nabla\eta(x)|\le \f{C}{r},\quad
		|\Delta\eta(x)|\le \f{C}{r^2},\quad~\text{for}~r\le |x|\le 2r. $$
Let $\psi_p=\vp_p\eta$. Then
\[\int_\Omega |\Delta \psi_p|^{1+\f{1}{p}}\rd x
		=\int_{|x|\le r} |\Delta \vp_p|^{1+\f{1}{p}}\rd x
			+\int_{r\le|x|\le 2r} |\Delta \vp_p\eta+2\nabla \vp_p\nabla\eta+\vp_p\Delta\eta|^{1+\f{1}{p}}\rd x=:I_1+I_2.\]
By $-\Delta z(y)=e^{z(y)+\frac{\theta}{2}}$ and $\int_{\R^2}e^{z(y)+\frac{\theta}{2}}=8\pi$, we have
$$
	\begin{aligned}
		I_1=&\int_{|x|\le r} |\Delta \vp_p|^{1+\f{1}{p}}\rd x
=\f{e^{\frac{p+1}{2p}}}{p^{\frac{p+1}{p}}\e_p^{2/p}}\int_{|y|\le \e_p^{-1}r} |\Delta z(y)|^{1+\f{1}{p}}\rd y\\
=&\frac{e}{p}\int_{|y|\le \e_p^{-1}r} e^{(z(y)+\f{\q}{2})(1+\f{1}{p})}\rd y=\f{8\pi e}{p}(1+o_p(1)).
	\end{aligned}
$$
On the other hand, by \[(a+b+c)^{1+\frac1p}\leq 2(a^{1+\frac1p}+b^{1+\frac1p}+c^{1+\frac1p}), \quad\forall a,b,c\geq 0,\; p\geq 2, \]
we have
\[I_2\leq C\left[\int_{r\le|x|\le 2r} |\Delta \vp_p|^{1+\f{1}{p}}+
\frac{1}{r^{1+\frac1p}}\int_{r\le|x|\le 2r} |\nabla \vp_p|^{1+\f{1}{p}}+
\frac{1}{r^{2+\frac2p}}\int_{r\le|x|\le 2r} |\vp_p|^{1+\f{1}{p}}\right].\]
By direct computations, we have
\[\int_{r\le|x|\le 2r} |\Delta \vp_p|^{1+\f{1}{p}}=\frac{e}{p}\int_{\e_p^{-1}r\leq |y|\le 2\e_p^{-1}r} e^{(z(y)+\f{\q}{2})(1+\f{1}{p})}\rd y=\frac{o_p(1)}{p},\]
and similarly,
$$\int_{r\le|x|\le 2r} |\nabla \vp_p|^{1+\f{1}{p}}=\frac{o_p(1)}{p},\quad \int_{r\le|x|\le 2r} |\vp_p|^{1+\f{1}{p}}=\frac{o_p(1)}{p}.$$
Therefore,
$$\int_\Omega |\Delta \psi_p|^{1+\f{1}{p}}\rd x=\f{8\pi e}{p}(1+o_p(1)).$$
Furthermore,
$$
	\begin{aligned}
	\int_\Omega \psi_p^{q+1}\rd x
	&=e^{\f{q+1}{2}}\e_p^2 \int_{|y|\le 2\e_p^{-1}r} \eta^{q+1}(\e_p y) \sbr{1+\f{z(y)}{p}}^{q+1}\rd y\\
	&=\f{e^{\f{\q_p}{2}+1}}{p} \left(\int_{|y|\le \e_p^{-1}r} e^{z(y)}(1+o_p(1))\rd y+o_p(1)\right)\\
	&=\f{8\pi e}{p}(1+o_p(1)),
	\end{aligned}
$$
so $t_p\psi_p\in\NR_p$ with
	$$t_p=\sbr{\f{\int_\Omega |\Delta \psi_p|^{1+\f{1}{p}}}{\int_\Omega \psi_p^{q+1}}}^{\f{p}{pq-1}}=1+o_p(1).$$
Since $u_p$ is a minimzier of $J_p|_{\NR_p}$ and for any $u\in\NR_p$,
	$$J_p(u)=\sbr{\f{p}{p+1}-\f{1}{q+1}}\int_\Omega |\Delta u|^{1+\f{1}{p}},$$
we have
	$$\int_\Omega\nabla u_p\cdot\nabla v_p=\int_\Omega |\Delta u_p|^{1+\f{1}{p}}
		\le t_p^{1+\f{1}{p}} \int_\Omega |\Delta \psi_p|^{1+\f{1}{p}}=\f{8\pi e}{p}(1+o_p(1)),$$
so \eqref{4-1} holds.
\ep

\bp[Proof of Corollary \ref{thm2}]
	Applying Theorem \ref{thm1} with $\beta=8\pi e$, we can prove the conclusion.
\ep

\vskip0.26in
\begin{appendices}

\end{appendices}


\vskip0.26in


 \end{document}